\theoremstyle{plain}
\newtheorem{thm}{Theorem}
\newtheorem{prop}{Proposition}
\newtheorem{cor}[prop]{Corollary}
\newtheorem{lm}[prop]{Lemma}
\theoremstyle{definition}
\newtheorem{rmk}{Remark}
\newtheorem{definition}{Definition}
\theoremstyle{remark}
\newtheorem*{claim*}{Claim}
\newtheorem*{fact}{Fact}
\def\OO{\mathcal{O}}
\def\C{\mathbb{C}}
\def\PP{\mathbb{P}}
\author{A. Alzati}
\address{Alberto Alzati,  Dipartimento di Matematica F.Enriques, Universit\`a di Milano,
via Saldini 50, 20133 Milano (Italy)}
\email{alberto.alzati@unimi.it}
\author{R. Re}
\address{Riccardo Re, Dipartimento di Scienza e Alta Tecnologia, Universit\`a dell' Insubria, Via Valleggio 11,
22100, Como (Italy)}
\email{riccardo.re@uninsubria.it}
\date{18 April, 2018 }
\subjclass[2010]{Primary 13A02, secondary 14N05}
\keywords{Weak Lefschetz property, artinian algebra, complete intersection}
\thanks{This work has been done within the framework of the national project
``Geometry on Algebraic Varieties", Prin (Cofin) 2015 of MIUR}
\title{Complete Intersections of Quadrics and the Weak Lefschetz Property}
\begin{document}
\maketitle
\begin{abstract} We consider graded artinian complete intersection algebras $A=\C[x_0,\ldots,x_m]/I$ with $I$ generated by homogeneous forms of degree $d\geq 2$. We  show that the general multiplication by a linear form $\mu_L:A_{d-1}\to A_d$ is injective. We prove that the Weak Lefschetz Property for holds for any c.i. algebra $A$ as above with $d=2$ and $m\leq 4$. \end{abstract}
\section{Introduction}
The Weak Lefschetz Property (in short WLP) of a graded algebra $A$, presented as the quotient $A=k[x_0,\ldots,x_m]/I$, with $I$ an homogeneous ideal, asserts that for any general linear form
$L=\sum a_ix_i$ and any $k\geq 0$, the multiplication map $\mu_L\colon A_k\to A_{k+1}$ has maximal rank.
It has been conjectured that if $\operatorname{char}(k)=0$ then any complete intersection $k$-algebra, i.e. an algebra as above with $I$ generated by a regular sequence, has the WLP. It is also conjectured that such an algebra should have the Strong Lefschetz Property, i.e. the analogous maximal rank property with $L$ replaced by $L^d$, for any $d$. The most significant case of these conjectures is the case when $A$ is artinian.
These are considered to be very challenging problems, despite affirmative answers for $m\leq 2$, see \cite{Mig2003}, and in the monomial case, see \cite{stanley} and \cite{watanabe}.

In this article we examine the case of a c.i. artinian algebra $A$ with $I$ generated in degree $d\geq 2$. 
The  first main result of the present article is an easy and geometrical proof of a generalization of the so called {\em Injectivity Lemma} of Proposition 4.3 of \cite{MN}. More precisely we show the injectivity of the general multiplication map $\mu_L:A_{d-1}\to A_d$. This result is stated in Corollary \ref{cor:injlemma}. That result is also used in Remark \ref{rmk:d2m3} to obtain a new and simple proof of the WLP for $d=2$ and $m=3$, already covered in \cite{MN}.

Then we examine the case $d=2$ and $m=4$, in which case we prove the WLP by introducing some geometrical methods that seem to be new in the existing literature. This is the content of the other main result of this paper, Theorem \ref{thm:WLPm=4} of Section \ref{sec:d=2m=4}. 

Despite we are proposing only limited progress toward the WLP conjecture, we hope that the methods of the present article can shed some light on the geometrical aspects of the general problem and inspire further investigations.

The plan of the article is the following. In Section \ref{sec:setup} we state the general results on artinian Gorenstein and c.i. algebras that are common knowledge for commutative algebraists and algebraic geometers. 
 Then, in Section \ref{sec:red_I2} we study some interesting stratifications of the projective space $\PP(I_d)$, associated to generating piece of the ideal $I$. These results are immediately used to give a prof of the Injectivity Lemma mentioned above.
 In Section \ref{sec:diff} we introduce our general geometrical method for approaching the WLP conjecture, that is we study the projectivization of variety of pairs $(z,Q)\in A_1\times A_s$ such $zQ=0$. We introduce some differential methods to this purpose. In the final sections \ref{sec:d=2} and \ref{sec:d=2m=4} we consider the case $d=2$ of the WLP conjecture for complete intersections in equal degrees, and give its solution for $m=4$, as already mentioned above. 
\section{General setup}\label{sec:setup}
\subsection*{Notations} Let $f_0,\ldots,f_m$ be homogeneous polynomials in $\C[x_0,\ldots,x_m]$ with no common zeros and of equal degree $d>1$, i.e. a regular sequence. 
 We set $I= (f_0,\ldots,f_m)\subset\C[x_0,\ldots,x_m]$ and we denote $A=\C[x_0,\ldots,x_m]/I$ the artinian quotient ring.  We set $U^\ast=\langle x_0,\ldots,x_m\rangle$, so that $\C[x_0,\ldots,x_m]=S^\bullet U^\ast=\bigoplus_{i=0}^\infty S^iU^\ast$. Observe that $U^\ast\cong A_1$, since we have $d>1$.
 
 Given a $\C$-vector space $V$, we will denote $\PP(V)$ the projective space of the $1$-dimensional vector subspaces of $V$. It $v\in V$ is a non-zero element, we will denote $[v]\in\PP(V)$ its associated point.
\begin{definition}\label{def:WLP} $A$ is said to have the Weak Lefschetz Property (WLP), if for any $k\in \mathbb{N}$ and for any fixed general linear form $L\in U^\ast$ the multiplication map $\mu_L\colon A_{k}\longrightarrow A_{k+1}$ has maximal rank. \end{definition}
Note that the property holds trivially if $k\leq d-2$, since in these cases either $I_{k}=I_{k+1}=0$ and hence $A_k=S^kU^\ast$ and $A_{k+1}=S^{k+1}U^\ast$, or if $k\gg 0$, since then one has $A_{k+1}=0$ . 
 
  \begin{definition} We say that $A$ fails the WLP because of surjectivity in degree $k>0$  if one has $\dim A_{k}\geq \dim A_{k+1}$ but the multiplication $\mu_L\colon A_{k}\to A_{k+1}$ is not surjective for any linear form $L$. In analogous way one defines the failure of WLP because of injectivity. In general we say that $A$ has the WLP in degree $k>0$ if for a general linear form $L$ the multiplication map $\mu_L\colon A_{k}\to A_{k+1}$ has maximal rank. 
  \end{definition}
  We collect a few well known facts in the following proposition.
 \begin{prop}\label{prop:dualWLP} The following facts hold.
 \begin{enumerate}
 \item If $\mu_L:A_{k}\to A_{k+1}$ is injective, then $\mu_L:A_{j}\to A_{j+1}$ is injective for any $j\leq k$.
 \item $A$ is self-dual as it is Gorenstein. As a consequence it is enough to check the WLP in the case of injectivity (resp. in the case of surjectivity).
 \item If $A$ is a c.i. algebra of type $(d,\ldots,d)$ then $\operatorname{socle}(A)=A_{(m+1)(d-1)}\cong \C$ and WLP holds for $A$ if for a general  linear form $L$ the multiplication map $\mu_L:A_{s}\to A_{s+1}$ is injective, with $s=\lceil (m+1)(d-1)/2\rceil -1$.  
 \end{enumerate}
 \end{prop}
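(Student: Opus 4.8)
The plan is to treat the three assertions in order, using the Gorenstein structure of $A$ as the single engine behind all of them; set $N:=(m+1)(d-1)$ throughout.

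For part (1) I would argue by propagating non-injectivity upward. Let $K=(0:_A L)$ be the annihilator of $L$, a graded ideal whose degree-$j$ piece is exactly $\ker(\mu_L\colon A_j\to A_{j+1})$. Suppose $\mu_L$ fails to be injective in some degree $j\le k$, so there is $0\neq a\in A_j$ with $La=0$. Since $A$ is Gorenstein its socle is concentrated in the top degree $N$; hence for $j<N$ the element $a$ is not in the socle, and there is a linear form $M\in A_1$ with $aM\neq 0$. Commutativity gives $L(aM)=(La)M=0$, so $0\neq aM\in K_{j+1}$. Iterating, $K_j\neq 0$ forces $K_{j'}\neq 0$ for every $j\le j'\le N$, and in particular $K_k\neq 0$, contradicting injectivity in degree $k$. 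This proves (1).

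For part (2) I would make the self-duality explicit through the multiplication pairing. Because $A$ is Gorenstein with one-dimensional socle in degree $N$, the pairings $A_i\times A_{N-i}\to A_N\cong\C$ given by multiplication are perfect, so each $A_i$ is canonically identified with the dual of $A_{N-i}$. The key point is that $\mu_L$ is \emph{self-adjoint} for these pairings: for $a\in A_i$ and $b\in A_{N-i-1}$ one has $\langle La,b\rangle=(La)b=a(Lb)=\langle a,Lb\rangle$. Hence the transpose of $\mu_L\colon A_i\to A_{i+1}$ is $\mu_L\colon A_{N-i-1}\to A_{N-i}$, and since a linear map is injective exactly when its transpose is surjective, injectivity of $\mu_L$ in degree $i$ is equivalent to surjectivity of $\mu_L$ in degree $N-i-1$. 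Thus the whole family of injectivity statements is equivalent to the whole family of surjectivity statements, which is the asserted reduction.

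For part (3) I would first record that the Hilbert series of a c.i. of type $(d,\ldots,d)$ is $\prod_{i=0}^m(1+t+\cdots+t^{d-1})$, a product of symmetric unimodal polynomials, hence itself symmetric and unimodal; its top degree is $N$ with coefficient $1$, which pins down $\operatorname{socle}(A)=A_N\cong\C$. Writing $h_i=\dim A_i$, unimodality together with the symmetry $h_i=h_{N-i}$ shows that maximal rank means injectivity in the degrees $i\le s=\lceil N/2\rceil-1$ and surjectivity in the degrees $i\ge\lfloor N/2\rfloor$. Now injectivity of the single map $\mu_L\colon A_s\to A_{s+1}$ yields, by part (1), injectivity in all degrees $j\le s$, and, by the adjointness of part (2), surjectivity in all degrees $N-1-j\ge N-1-s=\lfloor N/2\rfloor$. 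These two ranges together cover every degree, so this single map controls the WLP; the converse is immediate since $h_s\le h_{s+1}$. The genuinely structural input, and the step carrying the most weight, is the Gorenstein perfect pairing underlying both the concentration of the socle in degree $N$ used in (1) and the self-adjointness of $\mu_L$ used in (2)--(3); once this is granted the only remaining care is the bookkeeping that matches the $\lceil\cdot\rceil$ and $\lfloor\cdot\rfloor$ indices across the two parities of $N$.
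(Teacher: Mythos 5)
The paper states this proposition without proof, introducing it only as a collection of well-known facts, so there is no internal argument to compare against; your write-up is a correct and complete rendition of the standard arguments. Your three steps are exactly the classical ones: for (1), propagating a kernel element of $\mu_L$ upward by multiplying with linear forms until it would land in the socle, which is legitimate precisely because the Gorenstein hypothesis concentrates the socle in the top degree $N=(m+1)(d-1)$ (for a general Artinian algebra with low-degree socle elements this propagation, and indeed statement (1), can fail, so you were right to invoke Gorenstein here rather than treat (1) as formal); for (2), the self-adjointness of $\mu_L$ under the perfect multiplication pairings $A_i\times A_{N-i}\to A_N\cong\C$, giving injectivity in degree $i$ equivalent to surjectivity in degree $N-1-i$; and for (3), the symmetry and unimodality of the Hilbert function of a complete intersection of type $(d,\ldots,d)$ (via the factorization $\prod(1+t+\cdots+t^{d-1})$, a product of symmetric unimodal polynomials with nonnegative coefficients), which reduces maximal rank in every degree to injectivity for $j\leq s$ and surjectivity for $j\geq\lfloor N/2\rfloor$, both of which follow from the single map $\mu_L:A_s\to A_{s+1}$ by (1) and (2). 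Your index bookkeeping checks out in both parities of $N$: injectivity at $j\leq s$ dualizes to surjectivity at degrees $\geq N-1-s=\lfloor N/2\rfloor$, and the two ranges cover all degrees, overlapping exactly when $N$ is odd, where $h_s=h_{s+1}$ makes bijectivity the maximal-rank condition. No gaps.
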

 \subsection{The Macaulay inverse system $I^{-1}$}\label{subsec:inverse}
 It is appropriate also to mention at this point that the dual $A^\ast=\oplus A_i^\ast$ is a sub $S^\bullet U^\ast$ module of $S^\bullet U$, with respect the derivation action of $S^\bullet U^\ast$ on $S^\bullet U$. Indeed the action of a linear form $L\in U^\ast$ defined as the traspose of the multiplication map turns out to be a derivation, and indeed the Macaulay-Matlis duality more generally identifies $S^kU^\ast$ with the space of homogeneous linear differential operators  of degree $k$, with constant coefficients, acting on $S^\bullet U$. Then it is also clear that $A^\ast=\operatorname{Ann}(I)$ and it is usually indicated with $I^{-1}$, which is known as the {\em Macaulay inverse system} associated to $I$. The inverse system of any Gorenstein algebra $A$ is generated by a single element $g\in S^MU$ as a $S^\bullet U^\ast$-module, with $M$ equal to the socle degree of $A$. This means that $I^{-1}$ is linearly generated by derivatives of $g$ of any order.  In coordinates, setting $U=\langle u_0,\ldots, u_m\rangle$, with $(u_i)$ the dual  basis of $(x_i)$, then for any 
 $k\geq 0$ one has
 $$(I^{-1})_{M-k}=\langle \partial_{u_0}^{i_0}\cdots\partial_{u_m}^{i_m}(g)\ |\ i_0+\cdots+i_m=k\rangle.$$
 In our case we have $M=(m+1)(d-1)$. A basic, and possibly known, result for $I^{-1}$ that will be useful for us is the following.

 \begin{prop}\label{prop:T} The ideal $I=(f_0,\ldots,f_m)\subset \C[x_0,\ldots,x_m]$, with $f_i$ linearly independent and of degree $d$, is a c.i. ideal if and only if $I^{-1}_d$ does not contain any non-zero power of a linear form $p^d\in S^dU$, and in this case $I^{-1}_d$ has dimension ${m+d\choose d}-m-1$. 
 \end{prop}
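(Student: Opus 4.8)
The plan is to identify $I^{-1}_d$ with the annihilator of $I_d$ inside $S^dU$ under the perfect apolarity pairing $S^dU^\ast\times S^dU\to\C$ induced by the derivation action recalled above, and then to translate the condition ``$I^{-1}_d$ contains a nonzero $d$-th power'' into the geometric condition ``$f_0,\ldots,f_m$ have a common zero in $\PP^m$''.

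First I would record the dimension count, which does not use the c.i. hypothesis. Since each $f_i$ has degree exactly $d$, one has $I_d=\langle f_0,\ldots,f_m\rangle$, and linear independence of the $f_i$ gives $\dim I_d=m+1$. As the pairing is perfect (on monomials $\langle x^\alpha,u^\beta\rangle=\delta_{\alpha\beta}\,\alpha!$), the annihilator $I^{-1}_d$ has the complementary dimension $\dim S^dU-(m+1)={m+d\choose d}-m-1$.

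The heart of the argument is the evaluation of the pairing against a $d$-th power. Writing $p=\sum_i a_iu_i\in U$ and letting $f(x_0,\ldots,x_m)$ act as the constant-coefficient operator $f(\partial_{u_0},\ldots,\partial_{u_m})$, a short computation on monomials gives $\langle f,p^d\rangle=f(\partial_u)(p^d)=d!\,f(a_0,\ldots,a_m)$. Hence a nonzero power $p^d$ lies in $I^{-1}_d$ if and only if $f_i(a)=0$ for every $i$, i.e. if and only if the nonzero coefficient vector $a=(a_0,\ldots,a_m)$ is a common zero of $f_0,\ldots,f_m$, equivalently $[p]\in V(f_0,\ldots,f_m)\subset\PP^m$.

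To conclude I would invoke the standard dimension-theoretic fact that, for $m+1$ forms in the $m+1$ variables $x_0,\ldots,x_m$, the sequence $f_0,\ldots,f_m$ is regular (equivalently $I$ is a complete intersection, equivalently $A$ is artinian) precisely when $V(f_0,\ldots,f_m)=\{0\}$ in $\A^{m+1}$, that is when the $f_i$ have no common zero in $\PP^m$. Combining this with the previous step gives the asserted equivalence: $I^{-1}_d$ contains a nonzero $d$-th power of a linear form exactly when the $f_i$ share a projective zero, which is exactly when $I$ fails to be a complete intersection. I expect the only delicate points to be the correct normalization in the apolarity formula $\langle f,p^d\rangle=d!\,f(a)$ and a clean citation for the equivalence between emptiness of the projective base locus and regularity of the sequence; everything else is formal.
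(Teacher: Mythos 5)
Your proposal is correct and matches the paper's own argument essentially step for step: both identify $I^{-1}_d$ with $\operatorname{Ann}(I_d)$ under the apolarity pairing (giving the dimension count $\binom{m+d}{d}-m-1$), both use the identity $\langle f_i,p^d\rangle=d!\,f_i(p)$ to translate ``$p^d\in I^{-1}_d$'' into ``$[p]$ is a common zero of the $f_i$'', and both conclude via the standard equivalence between $I$ being a complete intersection and $V(f_0,\ldots,f_m)=\emptyset$. Your write-up merely makes explicit two points the paper leaves implicit (that the dimension count needs only linear independence, and the normalization of the pairing on monomials), which is fine.
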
 

 \begin{proof}  $I$ is c.i. if and only if $V(f_0,\ldots,f_m)=\emptyset$, if and only if there is no $[p]\in \PP(U)$ such that $f_0(p)=\cdots=f_m(p)=0$. But it is well known that $d!f_i(p)=\langle f_i,p^d\rangle$, with $\langle\ ,\rangle\colon S^dU^\ast\otimes S^dU\to \C$ the duality pairing. This shows that $I$ is c.i. if and only if there is no $[p^d]\in\PP(I_d^{-1})$.
Moreover $I_d^{-1}=\operatorname{Ann}(I_d)$ with respect to the duality pairing, and this fact explains the dimension formula. 
 \end{proof}

\begin{rmk}\label{rmk:vertexes} Observe that the non-zero homogeneous polynomials $q\in S^\bullet U$ define hypersurfaces  $V(q)\subset \PP(U^\ast)$.
 To any $L\in U^\ast$ we associate a derivative $\partial_L(q)=L(q)$, defined by the action of $S^\bullet U^\ast $ on $S^\bullet U$, and therefore
 $L(q)=0$ means that the hypersurface $V(q)$ is a cone with a vertex point in $[L]\in\PP(U^\ast)$. 
 \end{rmk}
 \begin{rmk} The inverse system $I^{-1}$ in association to the study of the WLP of the algebra $A=S^\bullet U^\ast/I$ has been already considered in the literature, for example it is a main theme of the article \cite{MOR}. 
\end{rmk}

 \section{Stratifications of $I_d$.}\label{sec:red_I2}
 In this section we collect some easy results on the homogeneous part of the ideal $I$ in degree $d$.
 We will denote $$\begin{array}{l}
 \nu_d(\PP(U^\ast))=\{[z^d]\in S^dU^\ast\ |\ [z]\in\PP(U^\ast)\}\\
 S=\{[Q]\in \PP(S^dU^\ast)\ |\ \exists z\in U^\ast,\exists P\in S^{d-1}U^\ast:\ Q=zP\}\end{array}.$$ More generally, for any $r\geq 1$, we denote 
 $$Sec_{r-1}(S)=\{[z_1P_1+\cdots+z_rP_r]\ |\ z_i\in U^\ast,\ P_i\in S^{d-1}U^\ast\}.$$
 \begin{prop}\label{prop:I_d} If $I=(q_0,\ldots,q_m)$ is a a c.i. ideal generated by forms of degree $d\geq 2$, then  $ \PP(I_d)\cap\nu_d(\PP(U^\ast))$ is a finite set and $\dim \PP(I_d)\cap S\leq 1$. More generally one has $\dim \PP(I_d)\cap Sec_{r-1}(S)\leq \min(2r-1,m)$.
 \end{prop}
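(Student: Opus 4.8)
The plan is to read $Sec_{r-1}(S)$ geometrically. Writing $\Lambda=V(z_1,\dots,z_r)\subset\PP(U)$ for the linear subspace of codimension $r$ cut out by independent linear forms, a form $z_1P_1+\cdots+z_rP_r$ is exactly a degree-$d$ form vanishing on $\Lambda$, so
\[
Sec_{r-1}(S)=\{[Q]\in\PP(S^dU^*)\ :\ Q|_\Lambda=0\ \text{for some linear }\Lambda\cong\PP^{m-r}\},
\]
and $\nu_d(\PP(U^*))\subset S=Sec_0(S)$. Throughout I would use that $I$ being a complete intersection means the $q_i$ have no common zero, i.e. the morphism $\phi=(q_0:\cdots:q_m)\colon\PP(U)\to\PP^m$ is finite and $\PP(I_d)=\phi^*|\OO(1)|$ is base-point-free. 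The engine of all three statements is a single restriction lemma: for any $\Lambda\cong\PP^{m-r}$ the map $R_\Lambda\colon I_d\to H^0(\OO_\Lambda(d))$ has image the linear system $\langle q_0|_\Lambda,\dots,q_m|_\Lambda\rangle$ defining $\phi|_\Lambda$; since $\Lambda\cap V(q_0,\dots,q_m)=\varnothing$ this system is base-point-free on $\PP^{m-r}$, whence its dimension is at least $m-r+1$ and
\[
\dim\bigl(I_d\cap H^0(\II_\Lambda(d))\bigr)=\dim\ker R_\Lambda\le (m+1)-(m-r+1)=r .
\]
In particular, for $r=1$ at most a $1$-dimensional subspace of $I_d$ is divisible by any fixed linear form.

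With this lemma the finiteness statement is immediate by a tangent-space argument. If $\nu_d(\PP(U^*))\cap\PP(I_d)$ were infinite it would contain a curve through some $[z^d]$; the embedded tangent space to the Veronese there is $\PP(z^{d-1}U^*)$, and since $\PP(I_d)$ is linear the tangent direction of the curve lies in $\PP(z^{d-1}U^*)\cap\PP(I_d)$. Hence $z^{d-1}U^*\cap I_d$, and a fortiori $zS^{d-1}U^*\cap I_d$, would have dimension $\ge 2$, contradicting the $r=1$ case of the lemma.

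For the bounds on $S$ and its secants I would form the incidence variety $\Sigma_r=\{(\Lambda,[Q])\in \mathbb{G}(m-r,\PP(U))\times\PP(I_d):Q|_\Lambda=0\}$. Its image under the second projection is $\PP(I_d)\cap Sec_{r-1}(S)$, while the fibres of the first projection are the spaces $\PP\bigl(I_d\cap H^0(\II_\Lambda(d))\bigr)$, of dimension $\le r-1$ by the lemma. Therefore $\dim\bigl(\PP(I_d)\cap Sec_{r-1}(S)\bigr)\le (r-1)+\dim\Gamma_r$, where $\Gamma_r=\{\Lambda:\ q_0|_\Lambda,\dots,q_m|_\Lambda\ \text{linearly dependent}\}$ is the support of the first projection, equivalently the locus where $\phi(\Lambda)$ is degenerate. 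Since the bound is automatic once $2r-1\ge m=\dim\PP(I_d)$, the assertion $\dim\bigl(\PP(I_d)\cap Sec_{r-1}(S)\bigr)\le 2r-1$ reduces to $\dim\Gamma_r\le r$; more precisely, stratifying $\Gamma_r$ by $t=\dim\langle\phi(\Lambda)\rangle$ and pairing each stratum with the fibre dimension $m-t-1$, one wants $\dim\{\Lambda:\dim\langle\phi(\Lambda)\rangle\le t\}\le 2r-m+t$ for $m-r\le t\le m-1$, the two extreme strata each contributing exactly $2r-1$.

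The main obstacle is precisely this estimate for the degeneracy loci $\Gamma_r$. The lemma by itself does not suffice: along the natural families of subspaces its kernel bound $r$ is attained — exactly when $\phi(\Lambda)$ is itself a linear $\PP^{m-r}$ — so no purely numerical counting closes the gap, and one genuinely needs the finiteness of $\phi$ to limit how many linear spaces $\phi$ can force into degenerate position. The decisive case is $r=1$, i.e. $\dim\bigl(S\cap\PP(I_d)\bigr)\le 1$: here $\Gamma_1\subset\PP(U^*)$ is the set of hyperplanes $V(z)$ whose image $\phi(V(z))$ spans only a hyperplane, and it is the degeneracy locus of the bundle map $I_d\otimes\OO\to S^d\mathcal{Q}$ on $\PP(U^*)$, with $\mathcal{Q}$ the tautological quotient. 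Bounding this locus by $1$ — rather than by its expected dimension, which is only a lower bound for degeneracy loci — is where real work is needed, presumably through a differential analysis of $\phi$ (using that $d\phi$ is an isomorphism off the ramification) that restricts the directions along which $\phi(\Lambda)$ can collapse. Once $r=1$ is settled, I would attack the general estimate by the same differential method transported to the Grassmannian, or by induction on $m$ via a general hyperplane section.
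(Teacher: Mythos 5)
Your restriction lemma is correct (base-point-freeness of $\langle q_0|_\Lambda,\ldots,q_m|_\Lambda\rangle$ forcing $\dim\ker R_\Lambda\le r$), and your proof of the finiteness of $\PP(I_d)\cap\nu_d(\PP(U^\ast))$ is complete and essentially identical to the paper's: differentiating a curve through $[z^d]$ yields two independent elements of $I_d\cap zS^{d-1}U^\ast$, which is the paper's pair $z_0^d,\ z_0^{d-1}z_1$. But for the two statements carrying the real content, $\dim(\PP(I_d)\cap S)\le 1$ and $\dim(\PP(I_d)\cap Sec_{r-1}(S))\le 2r-1$, your argument is genuinely incomplete, and you flag this yourself: the incidence correspondence $\Sigma_r$ reduces everything to the estimate $\dim\Gamma_r\le r$ on the degeneracy locus of $\Lambda\mapsto R_\Lambda$, and that estimate is left unproven (``presumably through a differential analysis of $\phi$''). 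The structural reason your set-up stalls is that fixing $\Lambda$ and restricting to it discards exactly the information that makes the bound work: a point of $Sec_{r-1}(S)$ is a product datum $\sum z_iP_i$, and the degree-$(d-1)$ factors $P_i$ carry codimension that the linear span $\Lambda=V(z_1,\ldots,z_r)$ alone cannot see. That is why your purely numerical count tops out at kernel dimension $r$.

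The paper closes this gap without ever bounding $\Gamma_r$: it differentiates the product structure itself. If $\dim(\PP(I_d)\cap Sec_{r-1}(S))\ge 2r$ (and $\le m$), take a bijective local analytic parametrization $\sum_{i=1}^r z_i(\lambda)P_i(\lambda)\in I_d$ with $2r$ parameters; the value $\sum_i z_iP_i$ together with the $2r$ first-order derivatives $\sum_i(z_iP_{ij}+z_{ij}P_i)$ are $2r+1$ independent elements of $I_d$, and \emph{every one of them} vanishes on $V(z_1,\ldots,z_r,P_1,\ldots,P_r)$, a nonempty set of codimension $\le 2r$ --- the hypersurfaces $V(P_i)$ cut codimension just as the linear forms do. Since any independent subset of $I_d$ extends to a basis, which is a regular sequence, $2r+1$ elements of $I_d$ cannot have a common zero locus of codimension $\le 2r$: contradiction. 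The doubling from codimension $r$ (your $\Lambda$) to codimension $2r$ (the $z_i$'s \emph{and} the $P_i$'s) is precisely the missing idea; note that your own Veronese argument already contains the germ of it --- differentiating along the family --- and applying that same differentiation to both factors of $z(\lambda)P(\lambda)$, rather than restricting $I_d$ to a frozen $\Lambda$, is what turns your partial reduction into a proof (the case $r=1$, a surface in $\PP(I_d)\cap S$, gives three elements $z_0P_0,\ z_0P_1+z_1P_0,\ z_0P_2+z_2P_0$ vanishing on $V(z_0,P_0)$).
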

 \begin{proof}  Assume $\dim  \PP(I_2)\cap\nu_2(\PP(U^\ast))\geq 1$. Consider a irreducible curve $C$ contained in that set, and consider a local analytic parametrization of this curve at a general point of the form $z_\lambda^d=(z_0+\lambda z_1+O(\lambda^2))^d=z_0^d+d\lambda z_0^{d-1}z_1+O(\lambda^2)$. Then $z_0^d, z_0^{d-1}z_1\in I_d$, impossible because otherwise they could be completed to a basis of $I_d$, so they should be a c.i., but they vanish on $V(x_0)$, a contradiction.
 
  Similarly assume that $\dim \PP(I_d)\cap S\geq 2$. Then there exists a local analytic parametrization of a surface in $\PP(I_d)$ of the form
  $$z_{\lambda,\mu}P_{\lambda,\mu}=(z_0+\lambda z_1+\mu z_2)(P_0+\lambda P_1+\mu P_2)+o(\lambda,\mu),$$ which would give
  $$z_0P_0, z_0P_1+z_1P_0,z_0P_2+z_2P_0 \in I_d.$$
  This produces a contradiction by similar reasons as above, since the three elements above are not a c.i., as they vanish on $V(z_0,P_0)$.
  In the general case, let us assume $m\geq \dim \PP(I_d)\cap Sec_{r-1}(S)\geq 2r$ and let us write a bijective local analytic parametrization  of such a family at a general point  $$\sum_{i=1}^{r}z_i(\lambda)P_i(\lambda)\in I_d,$$ with $\lambda=(\lambda_1,\ldots,\lambda_{2r})$. Then one may expand $$\begin{array}{l} z_i(\lambda)=z_i+\sum_j\lambda_j z_{ij}+o(\lambda)\\ P_i(\lambda)=P_i+\sum_j\lambda_j P_{ij}+o(\lambda)\end{array}$$ and find the relations $$\begin{array}{l} \sum_i z_iP_i\in I_d \\\
  \sum_i(z_iP_{ij}+z_{ij}P_i)\in I_d,\quad \forall\ j=1,\ldots,2r.\end{array}$$
Now observe that the $2r+1$ forms above are independent by construction and they all vanish on $V(z_1,\ldots,z_r,P_1,\ldots,P_r)$, which has codimension $\leq 2r$ in $\PP^m$, a contradiction, since they should be a complete intersection. \end{proof}
An immediate consequence of the proposition above significantly extends the {\em Injectivity Lemma} of Proposition 4.3 of \cite{MN}, proved in that paper in the case $d=2$.
\begin{cor}[Injectivity Lemma]\label{cor:injlemma} Let $A=\C[x_0,\ldots,x_m]/I$ an artinian algebra, with $I$ generated by a regular sequence of forms of degree $d$. Then for general $z\in A_1$ the multiplication map $\mu_z:A_{d-1}\longrightarrow A_d$ is injective. 
\end{cor}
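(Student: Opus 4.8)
The goal is to prove that for general $z\in A_1$ the map $\mu_z\colon A_{d-1}\to A_d$ is injective. I want to read this directly off Proposition \ref{prop:I_d}, so the first task is to translate injectivity of $\mu_z$ into a statement about how $\PP(I_d)$ meets the stratification introduced above.

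The plan is as follows. An element of $A_{d-1}$ is a class $[P]$ with $P\in S^{d-1}U^\ast$, and it lies in the kernel of $\mu_z$ precisely when $zP\in I_d$. Thus $\mu_z$ fails to be injective exactly when there is a nonzero $P\in S^{d-1}U^\ast$, not a multiple of any generator-relation forcing $P\in I_{d-1}$, with $zP\in I_d$; equivalently, the point $[zP]$ lies in $\PP(I_d)\cap S$, since $zP$ is by definition a point of the cone $S$ of reducible forms with a chosen linear factor $z$. First I would make this dictionary precise: if $\mu_z$ is non-injective for a \emph{general} $z$, then as $z$ varies the products $zP$ sweep out a subvariety of $\PP(I_d)\cap S$ whose dimension I can bound from below, and I would compare this with the bound $\dim \PP(I_d)\cap S\leq 1$ from Proposition \ref{prop:I_d}.

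The key counting step is to estimate the dimension of the locus of such products. Consider the incidence variety of pairs $([z],[P])$ with $[z]\in\PP(U^\ast)$ and $zP\in I_d$. If $\mu_z$ failed to be injective for every $z$ in a nonempty open subset of $\PP(U^\ast)\cong\PP^m$, then for each such $z$ the fibre over $[z]$ would contain at least a point $[P]$, so this incidence variety would dominate $\PP(U^\ast)$ and have dimension at least $m$. Projecting to $\PP(S^dU^\ast)$ via $([z],[P])\mapsto [zP]$ lands inside $\PP(I_d)\cap S$; I would check that this map has finite generic fibres (the factorization $zP$ of a form in $S$ is essentially unique up to scalar when $z$ is a genuine linear factor and $d\geq 2$), so that the image still has dimension at least $m-\dim\PP(U^\ast)$ contribution accounted for, forcing $\dim\PP(I_d)\cap S\geq 1$ to be compatible, and more sharply, tracking the freedom in $z$ shows the image has dimension at least $2$ once non-injectivity is generic. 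This directly contradicts $\dim\PP(I_d)\cap S\leq 1$.

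The main obstacle I anticipate is getting the dimension bookkeeping exactly right: I must ensure that the parametrization $([z],[P])\mapsto [zP]$ does not collapse the $m$-dimensional family of bad $z$ onto something of dimension $\leq 1$ for spurious reasons, for instance if the same product $zP$ arose from many different $z$. This is precisely where the hypothesis $d\geq 2$ and the unique-factorization argument enter, and it is the step to handle with care; the cleanest route is probably to run the infinitesimal argument of Proposition \ref{prop:I_d} in reverse, producing from a one-parameter family of bad directions $z$ a genuinely two-dimensional family inside $\PP(I_d)\cap S$, which is forbidden. Once the dimension count is secured, the corollary follows immediately, and part (1) of Proposition \ref{prop:dualWLP} then upgrades injectivity in degree $d-1$ to injectivity in all lower degrees.
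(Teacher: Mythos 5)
Your proposal is correct and is essentially the paper's own proof: the paper likewise argues that generic non-injectivity of $\mu_z$ would yield, for every general $[z]\in\PP(U^\ast)$, a product $zP\in I_d$ with essentially unique factorization, hence an $m$-dimensional family inside $\PP(I_d)\cap S$, contradicting the bound $\dim\PP(I_d)\cap S\leq 1$ of Proposition \ref{prop:I_d}. The only differences are cosmetic: you make the incidence variety and the finiteness of the fibres of $([z],[P])\mapsto[zP]$ explicit where the paper compresses this to one line, and your intermediate count ``dimension at least $m-\dim\PP(U^\ast)$'' is garbled, but your final conclusion that the image has dimension at least $m\geq 2$ is exactly the paper's.
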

\begin{proof} Otherwise for any general $[z]\in\PP(U^\ast)$ there exists some $P\in S^{d-1}U^\ast$ such that $zP\in I_d$, (note that such factorization is unique) hence $\dim(\PP(I_d)\cap S)\geq m=\dim\PP(U^\ast)$, but this contradicts the second dimension statement of Proposition \ref{prop:I_d}. \end{proof}

\begin{rmk}\label{rmk:d2m3} In \cite{MN} it was observed that, as a consequence of the result above in the case $d=2$, the WLP holds for $d=2$ and $2\leq m\leq 3$. Indeed in those cases one has $s=\lceil(m+1)(d-1)\rceil-1=1$.
\end{rmk}

\begin{rmk} Proposition \ref{sec:red_I2} actually gives more precise information than Corollary \ref{cor:injlemma} on the dimension so-called {\em non Lefschetz locus}, that is the locus of $[z]\in \PP(A_1)$ such that the moltiplication map $\mu_z:A_{d-1}\to A_d$ is not injective, by showing that this locus has dimension at most $1$. We refer the reader to the recent article \cite{BoMiRoNa} for many other deep results in this context.
\end{rmk}

 In the case of $d=2$, since quadrics are classified by their rank up to projective transformations, we can give a more precise and alternative form
 of Proposition \ref{prop:I_d}, which will be useful later to cover the case $d=2$ and $m=4$.
 \begin{prop}\label{prop:stratI2} Set $R_r=\{ [Q]\in \PP(S^2U^\ast)\ |\ \operatorname{rk}(Q)\leq r\}$ for any $1\leq r\leq m$, that is, $R_r$ is the projectivized set of quadrics of rank at most $r$. Let $I$ a c.i. ideal generated by $m+1$ quadrics. Then $\dim \PP(I_2)\cap R_r\leq r-1$.
 \end{prop}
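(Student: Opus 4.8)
The plan is to combine a purely algebraic consequence of the complete intersection hypothesis with the classical description of the tangent space to a symmetric determinantal variety.

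\medskip
\noindent\emph{Algebraic ingredient.} Since $I$ is generated by a regular sequence of $m+1$ quadrics and the generators lie in degree $2$, one has $I_2=\langle q_0,\ldots,q_m\rangle$, of dimension $m+1$; moreover any $t$ linearly independent elements of $I_2$ can be completed to a basis of $I_2$, i.e. to a minimal generating set of $I$, and being an invertible linear combination of a regular sequence of forms they themselves form a regular sequence. Hence the scheme they cut out in $\PP^m$ has pure codimension $t$. I will use this in the following form: if a linear subspace $W\subseteq I_2$ has the property that every $[Q]\in\PP(W)$ vanishes on a fixed subvariety $Z\subseteq\PP^m$ of codimension $c$, then $\dim W\le c$, because a basis of $W$ is a regular sequence whose common zero locus has codimension $\dim W$ and contains $Z$.

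\medskip
\noindent\emph{Geometric ingredient.} Recall that $R_r$ is the symmetric determinantal variety of quadrics of rank $\le r$, smooth exactly along the locus of quadrics of rank \emph{exactly} $r$, and that at a point $[Q_0]$ of rank $\rho$ the (affine) Zariski tangent space to the cone $\widehat{R_\rho}\subseteq S^2U^\ast$ is
$$T_{Q_0}\widehat{R_\rho}=\{\,M\in S^2U^\ast\ :\ M|_{\Lambda}=0\,\},$$
where $\Lambda=\operatorname{Vert}(Q_0)\subseteq\PP(U^\ast)$ is the vertex of the cone $V(Q_0)$, a linear subspace of codimension $\rho$. In other words, every quadric tangent to $R_\rho$ at $Q_0$ vanishes on the vertex of $Q_0$; this is immediate in coordinates normalising $Q_0=y_1^2+\cdots+y_\rho^2$, for which $\Lambda=V(y_1,\ldots,y_\rho)$ and the tangent matrices are exactly those whose lower-right $(m+1-\rho)\times(m+1-\rho)$ block vanishes.

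\medskip
\noindent\emph{Combination.} Let $Y$ be an irreducible component of $\PP(I_2)\cap R_r$ of maximal dimension, and let $\rho$ be the generic rank of the quadrics parametrised by $Y$; since $Y\subseteq R_r$ we have $\rho\le r$, and since the rank is lower semicontinuous $\rho$ is the maximal rank on $Y$, so $Y\subseteq R_\rho$. Pick a general $[Q_0]\in Y$, which then has rank exactly $\rho$ and is a smooth point of both $\widehat Y$ and $\widehat{R_\rho}$. The tangent space $W:=T_{Q_0}\widehat Y$ is contained in $I_2$ (because $\widehat Y\subseteq I_2$, a linear space) and in $T_{Q_0}\widehat{R_\rho}$ (because $\widehat Y\subseteq\widehat{R_\rho}$), so every member of $W$ vanishes on the vertex $\Lambda$ of $Q_0$, of codimension $\rho$. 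The algebraic ingredient then gives $\dim Y+1=\dim W\le\rho\le r$, that is $\dim Y\le r-1$, proving the proposition.

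The step requiring the most care is the second ingredient: one must work at a point of rank exactly $\rho$ so that the determinantal tangent-space formula holds there and the inclusion $T_{Q_0}\widehat Y\subseteq T_{Q_0}\widehat{R_\rho}$ can be read off, and one must take $Q_0$ general so that $\widehat Y$ is smooth at $Q_0$ and $\dim T_{Q_0}\widehat Y=\dim Y+1$. For even $\rho=2k$ the resulting estimate $\dim Y\le 2k-1$ recovers Proposition \ref{prop:I_d} via $R_{2k}=Sec_{k-1}(S)$; the new content here is that the same sharp bound persists for odd ranks.
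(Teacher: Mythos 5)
Your proof is correct, and it rests on the same engine as the paper's argument: at a general point of a top-dimensional component of $\PP(I_2)\cap R_r$ one exhibits many independent quadrics of $I_2$ all vanishing on the vertex of the general member, and this is incompatible with the complete intersection property, since $t$ independent elements of $I_2$ extend to a basis of $I_2$, hence form a regular sequence cutting out a locus of codimension exactly $t$. The packaging, however, differs in two genuine ways. The paper argues by induction on $r$: the inductive hypothesis forces the general $Q$ on the component to have rank exactly $r$, and the tangent directions are then computed by hand via the local parametrization $Q_\epsilon=\sum_i(L_i+\epsilon_1L_{i1}+\cdots+\epsilon_rL_{ir})^2$ afforded by the $GL(m+1)$-orbit structure, producing $Q$ and $F_j=\sum_iL_iL_{ij}$, which all vanish on $V(L_1,\ldots,L_r)$. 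You eliminate the induction by working at the generic rank $\rho\le r$ of the component and quoting the classical Zariski tangent space to the symmetric determinantal variety $\widehat{R_\rho}$ at a rank-$\rho$ point; your block description of that tangent space is exactly what the paper's derivative computation establishes concretely, and your $W=T_{Q_0}\widehat Y$ of dimension $\dim Y+1$ replaces the paper's ad hoc list $Q,F_1,\ldots,F_r$. What your route buys is a cleaner logical structure (a direct bound valid at any generic rank, with no induction and no contradiction scaffolding) and it makes transparent why the bound is insensitive to the parity of $\rho$, as you note via $R_{2k}=Sec_{k-1}(S)$ and Proposition \ref{prop:I_d}; what the paper's route buys is self-containedness, deriving the needed tangent directions from the orbit parametrization rather than citing determinantal-variety theory. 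One notational slip to fix: for $Q_0\in S^2U^\ast$ the vertex $\Lambda=\PP(\ker Q_0)$ of the cone $V(Q_0)$ lives in $\PP(U)$, not $\PP(U^\ast)$; this is harmless but worth correcting, since the codimension count for the regular sequence takes place in $\PP^m=\PP(U)$, where your argument does in fact use it.
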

 \begin{proof} We can use induction on $r$. The case $r=1$ is covered already in Proposition \ref{sec:red_I2}. Assume that $\dim\PP(I_2)\cap R_r\geq r>1$ and let $Q$ be a general point in a $r$ dimensional irreducible subvariety of that closed set. Hence, by the inductive hypothesis, it must be $\operatorname{rk}(Q)=r$ and therefore we can write $Q=L_1^2+\cdots+L_r^2$. Since the set of quadratic forms of rank $r$ is a single orbit under the action of $GL(m+1)$, we can find a local analytical parametrization of the given $r$-dimensional component of $\dim\PP(I_2)\cap R_r$ with parameters $\epsilon=(\epsilon_1,\ldots,\epsilon_r)$ such that 
 $$Q_\epsilon=\sum_{i=1}^r(L_i+\epsilon_1L_{i1}+\cdots+\epsilon_rL_{ir})^2\in \PP(I_2)\cap R_r.$$
 Moreover, by construction, we can assume that $Q$, $F_1=\partial_{\epsilon_1}Q_\epsilon|_{\epsilon=0},\ldots,F_r=\partial_{\epsilon_r}Q_\epsilon|_{\epsilon=0}$ are independent. Then, computing the derivatives above, we find that $$Q=\sum L_i^2,\ F_1=\sum L_iL_{i1},\ \ldots,\ F_r=\sum_iL_iL_{i,r}$$ are $r+1$ independent elements in $\PP(I_2)$ that vanish on $V(L_1,\ldots, L_r)\not=\emptyset$. Since $r\leq m$ and any base of $I_2$ must be a c.i., we obtain a contradiction.
 \end{proof}
\begin{rmk}
The results of this section, although very simple, strongly depend on the complete intersection hypothesis. They cannot in general be extended to the case of Gorenstein algebras $A$, even when these algebras are presented by quadrics. Indeed infinite series of counterexamples to the result in Corollary \ref{cor:injlemma} in the Gorenstein case have been provided in \cite{GoZa}, see for example their  Corollary 3.8.
\end{rmk}
 \section{Differential lemmas}\label{sec:diff}
 Set $s=\lceil(m+1)(d-1)/2\rceil-1$ as above. We introduce the variety
 $$\Gamma=\{([z],[Q])\in \PP(A_1)\times\PP(A_s)\ |\ zQ=0\},$$ which inherits from $\PP(A_1)\times\PP(A_s)$ the projections $\pi_1$ and $\pi_2$ to $\PP(A_1)$ and $\PP(A_s)$, respectively. 
 Assuming that WLP does not hold for the algebra $A$, that is the general multiplication map
 $\mu_z\colon A_s\to A_{s+1}$ is not injective, there exists a subvariety  $$\mathcal{W}\subset \Gamma,$$
defined as the unique component of $\Gamma$ with $\pi_1(\mathcal{W})=\PP(A_1)$. The uniqueness of $\mathcal{W}$ comes from the fact that the fibers of the projections of the variety $\Gamma$ are linear spaces.

 For general $[z]\in \PP(A_1)$ and $[Q]\in \pi_2(\mathcal{W})$, we define the vector spaces
 $$\begin{array}{c}\mathcal{Q}(z)=\{Q\in A_s\ |\ zQ=0\}\\
 \mathcal{Z}(Q)=\{z\in A_1\ |\ zQ=0\},\end{array}.$$
 This means that  $\pi_1^{-1}([z])=\PP\mathcal{Q}(z)$ and $\pi_2^{-1}([Q])=\PP\mathcal{Z}(Q)$.
We set the following notations, for general $[z]\in \PP(A_1)$ and general $[Q]\in\PP(A_1)$ such that $zQ=0$.

\begin{equation}\label{eq:deltaepsilon}
\begin{array}{c} \delta=\dim\pi_2^{-1}([Q])=\dim\mathcal{Z}(Q)-1,\\ \varepsilon=\dim\pi_1^{-1}([z])=\dim\mathcal{Q}(z)-1 .
\end{array}
\end{equation}
We also set \begin{equation}\label{eq:dimW}
N=\dim\mathcal{W}=m+\varepsilon=\dim\pi_2(\mathcal{W})+\delta.\end{equation}

Now consider a general point $([\bar{z}],[\bar{Q}])\in\mathcal{W}$ and a system of parameters $\lambda=(\lambda_1,\ldots,\lambda_N)$ for $\mathcal{W}$ centered at $([\bar{z}],[\bar{Q}])$. We denote $([z],[Q])$ a point in a neighborhood of $([\bar{z}],[\bar{Q}])$, with the understanding that $z=z(\lambda_1,\ldots,\lambda_N)$ and $Q=Q(\lambda_1,\ldots,\lambda_N)$ are analytic functions defined at $\lambda=0$. Finally, we denote
$$z_i=\partial_{\lambda_i}z,\quad Q_i=\partial_{\lambda_i}Q, \quad i=1,\dots,N.$$

\noindent
\begin{lm}\label{lm:diff} Under the notations above, one has $\dim\langle z_1,\ldots,z_N\rangle=N-\varepsilon=m$ and $\dim\langle Q_1,\ldots,Q_N\rangle=N-\delta$. In particular one has $\langle z, z_1,\ldots,z_N\rangle=U^\ast$.
Moreover, the following relations hold, for any $i,j=0,\ldots,N$.
\begin{enumerate}
\item[\it{i})] If $Q,Q'\in\pi_1^{-1}([z])$, then $QQ'=0\in A_{2s}$,
\item[\it{ii})] $Q\left(\frac{\partial Q}{\partial \lambda_i}\right)=0\in A_{2s}$,
 \item[\it{iii})] $z\left(\frac{\partial Q}{\partial \lambda_i}\right)\left(\frac{\partial Q}{\partial \lambda_j}\right)=0\in A_{2s+1}$,
  \item[\it{iv})] For any $h\geq 0$ and for any $1\leq i_1,\ldots,i_h\leq N$ one has $$z^{h+1}\left(\frac{\partial^h Q}{\partial \lambda_{i_1}\cdots\partial \lambda_{i_h}}\right)=0\in A_{s+h}.$$ In particular, for any $i$, one has $z^2\frac{\partial Q}{\partial \lambda_i}=0$. \end{enumerate}
\end{lm}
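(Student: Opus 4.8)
The plan is to treat the three kinds of statements—the dimension count, the ``master'' relation \textit{iv)}, and the product relations \textit{i)}--\textit{iii)}—by separate arguments, since they draw on different inputs. For the dimensions I would use the differentials of the two projections restricted to $\mathcal{W}$. The map $\pi_1|_{\mathcal{W}}\colon\mathcal{W}\to\PP(A_1)$ is dominant with $m$-dimensional image and general fibre $\PP\mathcal{Q}(z)$ of dimension $\varepsilon$, so $N=m+\varepsilon$ as in \eqref{eq:dimW} and the differential of $\lambda\mapsto[z(\lambda)]$ at the general point has rank $m$. Choosing the local lift $z(\lambda)$ inside a fixed affine chart of $\PP(A_1)$ (normalising one coordinate to be constant $\equiv 1$), all the $z_i=\partial_{\lambda_i}z$ lie in a fixed hyperplane $H\subset U^\ast$ not containing $z$; the rank statement then forces $\dim\langle z_1,\dots,z_N\rangle=m=N-\varepsilon$ and hence $\langle z,z_1,\dots,z_N\rangle=H\oplus\langle z\rangle=U^\ast$. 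The identical argument applied to $\pi_2|_{\mathcal{W}}$, whose image has dimension $N-\delta$ by \eqref{eq:dimW}, yields $\dim\langle Q_1,\dots,Q_N\rangle=N-\delta$.

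Relation \textit{iv)} is the cornerstone and I would prove it by induction on $h$, the base case $h=0$ being the defining identity $zQ=0$, which holds identically on $\mathcal{W}$. Assuming $z^{\,h}\,\partial^{\,h-1}_{\lambda}Q=0$ for every multi-index of length $h-1$, I differentiate with respect to $\lambda_{i_h}$ to get $h\,z^{h-1}z_{i_h}\,\partial^{\,h-1}_\lambda Q+z^{h}\,\partial^{\,h}_\lambda Q=0$; multiplying by $z$ and using the inductive hypothesis $z^{h}\partial^{\,h-1}_\lambda Q=0$ to kill the first term leaves precisely $z^{h+1}\partial^{\,h}_\lambda Q=0$. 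In particular $z^2Q_i=0$, which is used repeatedly below.

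The product relations are the delicate part. Here I would reduce every asserted equality in a degree $A_t$ (with $t<M=(m+1)(d-1)$) to socle identities in $A_M\cong\C$, exploiting that the Gorenstein pairing $A_t\times A_{M-t}\to\C$ is perfect and that $A_{M-t}$ is spanned by monomials in $U^\ast=\langle z,z_1,\dots,z_N\rangle$. The seed is $Q^2=0$: differentiating $zQ=0$ gives $z_jQ=-zQ_j$, so $Q^2$ paired against any monomial reduces, through $zQ=0$ and $z^2Q_i=0$, to a vanishing socle product. Once $Q^2=0$ is available I differentiate the first-order identity $z_iQ+zQ_i=0$ once more and multiply by $Q$; the $Q^2$-term and the $zQ$-term drop out, leaving $z_i(QQ_j)+z_j(QQ_i)=0$, which, combined with $zQ_iQ_j=-z_iQQ_j=-z_jQQ_i$, forces $zQ_iQ_j=0$, i.e.\ relation \textit{iii)}. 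Relation \textit{ii)} then follows by pairing $QQ_i$ against $U^\ast$: the pairing with $z$ vanishes since $zQ=0$, and the pairing with any $z_j$ equals $-zQ_iQ_j=0$ by \textit{iii)}. Finally \textit{i)} follows by writing any $P,P'\in\mathcal{Q}(z)$ in terms of $Q$ and the fibre-direction derivatives $Q_k$ (those with $z_k=0$, hence $zQ_k=0$), for which the analogous pairing computations—together with the second derivative of $z_jQ+zQ_j=0$ in a fibre direction—give $QQ_k=0$ and $Q_kQ_l=0$, whence $PP'=0$ by bilinearity.

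The main obstacle is exactly the interlocking of \textit{i)}, \textit{ii)}, \textit{iii)}: none of them can be obtained in isolation by a single differentiation, and the apparent circularity ($zQ_iQ_j$, $QQ_i$ and $QQ'$ all determine one another) is broken only by first extracting $Q^2=0$ from \textit{iv)} and then bootstrapping in the order \textit{iii)}$\to$\textit{ii)}$\to$\textit{i)}. A secondary nuisance is the parity of $M$: since $M-2s\in\{1,2\}$, the complementary pieces $A_{M-t}$ are spanned either by linear forms or by their pairwise products, so each socle reduction has to be checked in both regimes, but in every case it closes up through the relations $zQ=0$ and $z^2Q_i=0$ furnished by \textit{iv)}.
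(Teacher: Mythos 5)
Your proposal is correct, but for parts \textit{i)}--\textit{iii)} it runs in the opposite order to the paper's argument, and the ``interlocking circularity'' you describe is an artifact of that choice rather than intrinsic to the problem. The paper proves \textit{i)} first, in a single stroke: differentiating $zQ\in I_{s+1}$ gives the relation $z_iQ+zQ_i\in I_{s+1}$ (its (\ref{eq:basicrel2})); multiplying by an \emph{arbitrary} $Q'\in\mathcal{Q}(z)$ and using $zQ'\in I$ yields $z_iQQ'\in I$ for all $i$, and since $\langle z,z_1,\ldots,z_N\rangle=A_1$ one gets $A_1\cdot QQ'=0$ in degree $2s<M$, so $QQ'$ lies in the socle and hence vanishes. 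Then \textit{ii)} follows by differentiating the identity $Q(\lambda)^2\in I_{2s}$, \textit{iii)} is the one-line computation $zQ_iQ_j=-z_iQQ_j=0$, and \textit{iv)} is proved by exactly the induction you give. Your route instead establishes \textit{iv)} first, extracts $Q^2=0$ by pairing into the socle (your case split on $M-2s\in\{1,2\}$ checks out: monomials in $z,z_i$ span $A_{M-2s}$, and $z_iz_jQ^2=z^2Q_iQ_j=0$ by \textit{iv)}), bootstraps \textit{iii)}$\to$\textit{ii)} via the verified identities $z_iQQ_j+z_jQQ_i=0$ and $z_iQQ_j=z_jQQ_i$ (using characteristic $0$), and only reaches \textit{i)} at the end through the decomposition of $\mathcal{Q}(z)$ as $\langle Q\rangle$ plus fibre-direction derivatives $Q_k$, together with $QQ_k=Q_kQ_l=0$. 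That last step is sound but needs one justification you only gesture at: that the fibre of $\mathcal{W}$ over a general $[z]$ is the \emph{entire} linear space $\PP\mathcal{Q}(z)$, so that the tangent directions $Q_k$ span $\mathcal{Q}(z)$ modulo $Q$; this is true (the fibre is closed of dimension $\varepsilon$ inside the irreducible $\varepsilon$-dimensional $\PP\mathcal{Q}(z)$, consistently with $N=m+\varepsilon$), but the paper's multiplication-by-$Q'$ trick makes both it and the whole bootstrap order unnecessary, since an arbitrary fibre element can be fed directly into the differentiated relation. What your version buys in exchange is a proof of \textit{ii)} that avoids differentiating $Q(\lambda)^2\in I_{2s}$ along $\mathcal{W}$ (which requires \textit{i)} to hold identically in $\lambda$, a closedness point the paper passes over silently), and a uniform duality-pairing mechanism in place of the paper's socle-membership argument; the dimension statements and part \textit{iv)} are handled essentially identically in both.
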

\begin{proof}  The first assertions of the Lemma are clear by the fact that $\pi_1(\mathcal{W})=\PP(U^\ast)$, and by the surjectivity of the  tangent maps for $\pi_1$ and $\pi_2$ at a general point $([z],[Q])\in\mathcal{W}$.

By applying $\frac{\partial}{\partial\lambda_i}$ to the relation $zQ=\sum c_j(\lambda)f_i(x)\in I_{s+1}$, with $x=(x_0,\ldots,x_m)$ a fixed base of $U^\ast$ and $f_j(x)$ a generating set for $I_{s+1}\subset S^{s+1}U$,
we immediately see that
\begin{equation}\label{eq:basicrel2}
z_iQ+z\frac{\partial Q}{\partial\lambda_i}\in I_{s+1}.
\end{equation}
Multiplying the relation above by $Q'$ and using $zQ'\in I$, we obtain
$z_iQQ'\in I_{2s+1}$ and, since $A_1=\langle z,z_1,\ldots,z_m\rangle$, we see that $A_1\cdot QQ'=\{0\}\subset A_{2r}$. By our choice of $s$ we have $\operatorname{socle}(A)_{2s}=\{0\}$, therefore we find $QQ'\in I_{2s}$. This proves (i).

Setting $Q=Q'$, by derivations of the last relation, we get $Q\frac{\partial Q}{\partial\lambda_j}\in I_{2s}$.
Next, we compute
$z\frac{\partial Q}{\partial\lambda_i}\frac{\partial Q}{\partial\lambda_j}=-z_iQ\frac{\partial Q}{\partial\lambda_j}=0$, using (\ref{eq:basicrel2}).
This completes the proof of {\it i}), {\it ii}) and {\it iii}).

We prove {\it iv}) by induction on $h$, (we assume that it is true for $h$ and we prove it for $h+1$) the base case being the starting relation $zQ=0$. Let us assume by inductive hypothesis $$z^{h+1}F\in I_{s+h},\quad F=\frac{\partial^h Q}{\partial \lambda_{i_1}\cdots\partial \lambda_{i_h}}.$$ By derivation with respect to $\lambda_i$ we obtain 
$$(h+1)z^hz_i F+z^{h+1}\frac{\partial F}{\partial\lambda_i}\in I_{s+h}.$$ Then the inductive step is immediately proved by multiplying this last relation by $z$ and using that $z^{h+1}F\in I_{s+h}$.

\end{proof}
 \section{complete intersection algebras presented by quadrics}\label{sec:d=2}
 We recall the following simple formula for Hilbert function of an artinian c.i. algebra presented by quadrics. 
 \begin{fact}\label{fact:HF} Let $A$ be the artinian algebra obtained from a complete intersection of quadrics in $\PP^m$. Then one has $HF(A,k)=\dim A_k={m+1\choose k}$ for any $k\geq 0$.
 \end{fact}
Indeed one can compute $\dim A_k$ from the Koszul resolution
 $$0\to S(-2(m+1))\otimes \bigwedge^{m+1}U^\ast\to \cdots\to S(-2)\otimes \bigwedge^1U^\ast\to S\to A\to 0,$$ and
  the fact that this resolution gives as result $\dim A_k={m+1\choose k}$ can be seen directly considering the special case $I=(x_0^2,\ldots,x_m^2)$, in which case $A_k$ has basis given by the classes mod $I$ of all squarefree monomials $x_0^{\epsilon_0}\cdots x_m^{\epsilon_m}$, i.e. with exponents  $\epsilon_i\in\{0,1\}$, whose number is exactly ${m+1\choose k}$. 
\vskip2mm
 The following technical lemma will be useful later.

 \begin{lm}\label{lm:bdeltageq1} Let $A$ be a c.i. artinian algebra as above, with $m\geq 4$. Then the following properties hold.
\begin{enumerate}
\item For any two distinct points $[z],[w]\in \PP(U)$ one has $\dim \langle z,w\rangle A_1\geq 2m-1$.
\item Denoting $\overline{\varUpsilon}$ the subvariety of $Gr(m-2,\PP(U))$ whose points are the $(m-2)$-planes $\Pi=V(z,w)\in Gr(m-2,\PP(U))$ such that $\dim \langle z,w\rangle A_1= 2m-1$, then $\dim \overline{\varUpsilon}\leq 3$. 
\end{enumerate}
\end{lm}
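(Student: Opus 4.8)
The plan is to translate both statements into facts about $I_2$ seen inside $S^2U^\ast$, and then to combine the rank stratification of Proposition \ref{prop:stratI2} with a projection argument. Here $[z],[w]$ are to be read in $\PP(U^\ast)=\PP(A_1)$, so that $\langle z,w\rangle A_1=zA_1+wA_1\subseteq A_2$.

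First I would reduce (1) to a statement purely about $I_2$. Set $W=zU^\ast+wU^\ast\subseteq S^2U^\ast$, which is exactly the degree-$2$ part of the ideal of the $(m-2)$-plane $\Pi=V(z,w)$; since $zU^\ast\cap wU^\ast=\langle zw\rangle$ one has $\dim W=2m+1$. The surjection $A_1\oplus A_1\to\langle z,w\rangle A_1$, $(a,b)\mapsto\overline{za+wb}$, factors through $W$ and has kernel of dimension $1+\dim(I_2\cap W)$, whence
\[
\dim\langle z,w\rangle A_1=2m+1-\dim(I_2\cap W).
\]
Thus (1) is equivalent to $\dim(I_2\cap W)\le 2$, and the equality $\dim\langle z,w\rangle A_1=2m-1$ of (2) is exactly $\dim(I_2\cap W)=2$. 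Now $I_2\cap W$ is the space of quadrics of the c.i. vanishing on $\Pi$. If it had dimension $\ge 3$ we would find three independent $Q_1,Q_2,Q_3\in I_2$ vanishing on $\Pi$; since any basis of $I_2$ is a regular sequence (the ideal is a c.i. minimally generated in degree $2$), $Q_1,Q_2,Q_3$ is a regular sequence and $V(Q_1,Q_2,Q_3)$ has codimension $3$, which cannot contain the codimension-$2$ linear space $\Pi$. This contradiction proves (1), in the spirit of Propositions \ref{prop:I_d} and \ref{prop:stratI2}.

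For (2), by the reduction above $\overline\varUpsilon$ is the locus of $(m-2)$-planes $\Pi$ lying on a pencil $\Lambda_\Pi:=\PP(I_2\cap I(\Pi)_2)\subseteq\PP(I_2)$ of quadrics of the c.i.; as $\Pi$ is a component of the base locus $V(\Lambda_\Pi)$, the assignment $\Pi\mapsto\Lambda_\Pi$ is finite-to-one, and it suffices to bound the dimension of the set of such pencils. The key geometric observation is that if $\Pi\subseteq V(Q)$ then $\Pi$ is a totally isotropic subspace of projective dimension $m-2$ for the quadratic form $Q$, which forces $\rk(Q)\le 4$; hence the whole pencil $\Lambda_\Pi$ lies in $R_4$. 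I would then stratify by the generic rank $r_0\le 4$ of the members of $\Lambda_\Pi$, using Proposition \ref{prop:stratI2} in the form $\dim(\PP(I_2)\cap R_r)\le r-1$, together with the dimension of the family of $(m-2)$-planes contained in a rank-$r$ quadric: for $r\le 2$ the quadric is reducible and this input is handled below, while for $r=3,4$ the $(m-2)$-planes form a $1$-dimensional ruling (the vertex joined to a point of a conic, resp. a line of a quadric surface).

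The decisive step, which disposes of what would otherwise be the main obstacle (the reducible quadrics of rank $\le 2$, whose $V(Q)$ carry a large family of $(m-2)$-planes), is to project the relevant incidence to $\Pi$. If $r_0\le 2$ then $\Lambda_\Pi$ is a line contained in $\PP(I_2)\cap R_2$, a set of dimension $\le 1$; a line inside a $\le 1$-dimensional set is one of its finitely many components, so only finitely many such $\Pi$ occur. If $r_0=3,4$ I consider $\mathcal{J}_{r_0}=\{(\Pi,[Q]):\rk(Q)=r_0,\ \Pi\subseteq V(Q),\ \Pi\in\overline\varUpsilon\}$, whose dimension is at most $(r_0-1)+1$ by the two bounds above. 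Projecting to $\Pi$, the fibre over a given $\Pi$ consists of the rank-$r_0$ members of $\Lambda_\Pi$, that is $\Lambda_\Pi$ minus finitely many points, hence is $1$-dimensional; so the image has dimension $\le(r_0-1)+1-1=r_0-1\le 3$. Taking the maximum over $r_0$ yields $\dim\overline\varUpsilon\le 3$. I expect the maximal-isotropic computation, the ruling count for $r=3,4$, and the regular-sequence property of an arbitrary basis of $I_2$ to be the only genuinely technical points; it is the projection argument that upgrades the crude stratification estimate to the sharp value $3$.
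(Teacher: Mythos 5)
Your proposal is correct and follows essentially the same route as the paper: the identical reduction of (1) to $\dim(I_2\cap \langle z,w\rangle U^\ast)\leq 2$ via the regular-sequence/codimension argument, and for (2) the same incidence count between $(m-2)$-planes and quadrics in $\PP(I_2)\cap R_4$, combining Proposition \ref{prop:stratI2} with the $1$-dimensionality of the family of $(m-2)$-planes on a quadric of rank $3$ or $4$ and the $1$-dimensional pencils $\Lambda_\Pi$ as fibers over each $\Pi$. The only (harmless) deviation is the low-rank case: the paper excludes pencils lying in $R_2$ outright, since such a pencil has a fixed hyperplane component contradicting the c.i.\ hypothesis, whereas you dispose of them by observing that a line inside the $\leq 1$-dimensional set $\PP(I_2)\cap R_2$ must be one of its finitely many components, contributing only finitely many $\Pi$ via your finite-to-one remark --- both arguments are valid.
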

\begin{proof} {(1).} One has 
$\langle z,w\rangle A_1=(\langle z,w\rangle U^\ast+I_2)/I_2\cong \langle z,w\rangle U^\ast/(\langle z,w\rangle U^\ast\cap I_2)$.
Note that $\dim\langle z,w\rangle U^\ast=2(m+1)-1=2m+1$, as it is the degree $2$ piece of the ideal generated by $z,w$. Therefore, the general bound  $\dim \langle z,w\rangle A_1\geq 2m-1$ is equivalent to $\dim(\langle z,w\rangle U^\ast\cap I_2)\leq 2$. But this latter fact is clearly true, because $I_2$ is generated by a complete intersection and therefore at most $2$  independent element of $I_2$ can define hypersurfaces that contain the $2$-codimensional linear space $\Pi=V(z,w)$. 
\paragraph{(2)}
By the calculations in the proof of statement (1), the only possibility for obtaining $\dim \langle z,w\rangle A_1= 2m-1$ is $$\dim\PP(\langle z,w\rangle U^\ast\cap I_2)=1.$$ Let us introduce the incidence variety
$$\mathcal{Y}=\{(\Pi,[F])\ |\  \Pi\in \operatorname{Gr}(m-2,\PP(U)),\ [F]\in \PP(I_2),\ \Pi \subset V(F)\},$$ endowed with the two projections 
$$\pi_1(\Pi,[F])=\Pi,\quad \pi_2(\Pi,[F])=[F]$$ to $Gr(m-2,\PP(U))$ and $\PP(I_2)$, respectively. Let also introduce the subvariety
$$\varUpsilon=\{(\Pi,[F])\in\mathcal{Y}\ |\ \dim\pi_1^{-1}(\Pi)=1\}.$$
Then the variety in the statement is $$\overline{\varUpsilon}=\pi_1(\varUpsilon).$$
Note that, setting $\Pi=V(z,w)$, then $\PP(\langle z,w\rangle U^\ast\cap I_2)=\pi_2\pi_1^{-1}(\Pi)$. As we have shown above, these spaces are all lines for any $\Pi\in\overline{\varUpsilon}$ and hence the irreducible components of $\varUpsilon$ are the inverse images of the irreducible components of $\overline{\varUpsilon}=\pi_1(\varUpsilon)$. We denote
$$\overline{\varUpsilon}=\overline{\varUpsilon}_1\cup\cdots\cup\overline{\varUpsilon}_r, \quad \varUpsilon=\varUpsilon_1\cup\cdots\cup\varUpsilon_r,\quad \varUpsilon_i=\pi_1^{-1}(\overline{\varUpsilon}_i)$$ the corresponding decomposition into irreducible components. Observe that $\dim\varUpsilon_i=\dim\overline{\varUpsilon_i}+1$.

 Note also that  $(\Pi,[F])\in\varUpsilon$ implies $F=zL+wM$, with $\Pi=V(z,w)$, which implies that $[F]\in R_4$, and of course $\Pi$ is one of the $(m-2)$-planes contained in $V(F)$. Hence $\pi_2(\varUpsilon)\subseteq \PP(I_2)\cap R_4$ and for any $[F]\in \pi_2(\varUpsilon)$ one has $$\pi_1(\pi_2|_{\varUpsilon})^{-1}([F])\subseteq \{\Pi\ |\ \Pi\subset V(F)\}.$$

 If for some $\Pi\in\overline{\varUpsilon}$ it were $\pi_2\pi_1^{-1}(\Pi) \subseteq R_2$, then $\dim\PP(\langle z,w\rangle U^\ast\cap I_2)=\dim\pi_2\pi_1^{-1}(\Pi)=1$ means that $\Pi=V(z,w)$ is  contained in a pencil of rank $2$ (and hence reducible) quadrics  $F_{\lambda,\mu}\in I_2$. Such a pencil has the form either $(\lambda z+\mu w)L$ or $(\alpha z+\beta w)M_{\lambda,\mu}$. In any case the given pencil has a fixed hyperplane component, and therefore it cannot be contained in $I_2$, since this latter is generated by a complete intersection. So $\pi_2\pi_1^{-1}(\Pi) \subseteq R_2$ is impossible. Then we have that
 for any component $\varUpsilon_i$ of $\varUpsilon$ a general point $(\Pi,[F])\in\varUpsilon_i$ is such that $\operatorname{rk}(F)\geq 3$.
 \vskip2mm
 
 It is well known that any quadric $[F]$ of rank  $3$ or $4$ has the family of linear spaces $\Pi\cong\PP^{m-2}$ contained in $V(F)$ of  dimension $1$, hence for a general $[F]\in\pi_2(\varUpsilon_i)$ one has $(\dim\pi_2|_\varUpsilon)^{-1}([F])\leq 1$. Therefore we have shown that for any $i=1,\ldots,r$ one has
 \begin{equation}\label{eq:dimUpsiloni}\dim\varUpsilon_i=\dim(\overline\varUpsilon_i)+1\leq \dim\pi_2(\varUpsilon_i)+1.\end{equation}
By Proposition \ref{prop:stratI2} we know that  $\dim \PP(I_2)\cap R_4\leq 3$ and, since $\pi_2(\varUpsilon_i)\subseteq \PP(I_2)\cap R_4$, we have  $\dim\pi_2(\varUpsilon_i)\leq 3$ and therefore $\dim\varUpsilon_i\leq 4$ and $\dim\overline{\varUpsilon}_i\leq 3$. 
 
\end{proof}
 \section{Application: the case $d=2$ and $m=4$}\label{sec:d=2m=4}
 In this last section we apply the results obtained so far to prove that the WLP holds for c.i. algebras presented by quadrics in $\PP^4$, that is we assume $d=2$ and $m=4$. Even this simple case appears not to be covered in the existing literature. For $d=2$ and $m=4$ the dimensions of $A_i$ for $i=0,\ldots,5$ are $$1,5,10,10,5,1$$
By the results stated in Proposition \ref{prop:dualWLP}, we only need to examine the general multiplication map $$\mu_z\colon A_2\to A_3,$$ as in the present case one has $s=\lceil(m+1)(d-1)/2\rceil-1=2$.
 
 Note that by letting $[z]$ vary in $\PP(A_1)$, we can define a sheaf morphism
 \begin{equation}\label{eq:Kbundle} A_2\otimes\OO_{\PP(A_1)}\stackrel{\Phi}\longrightarrow A_3\otimes \OO_{\PP(A_1)}(1),
 \end{equation}
 with $\Phi$ defined fiberwise by $\Phi([z])=\mu_z$.
 Note that $\Phi$ symmetric, that is, identifying $A_3\cong A_2^\ast$ by means of the multiplication pairing $A_3\otimes A_2\to A_5\cong\C$, then one can write $\Phi$ as a sheaf map $\Phi\colon A_2\otimes\OO_{\PP^1}\to A_2^\ast\otimes\OO_{\PP^1}(1)$, and then $\Phi=\Phi^\ast(1)$ .  In particular $\Phi$ can be described a symmetric matrix of linear forms, with respect to a choice of a basis for $A_2$ and its dual basis for $A_3\cong A_2^\ast$.
 
Assuming that the WLP does not hold, let $([z],[Q])\in\mathcal{W}$ be a general element. We consider the multiplication map
 $$\mu_Q:A_2\longrightarrow A_4\cong \C^5.$$
 Under the notations of Section \ref{sec:diff}, we have the following result.
 \begin{lm}\label{lm:cokermuQ} $\dim\operatorname{coker}(\mu_Q)=\dim \mathcal{Z}(Q)=\delta+1$.
 \end{lm}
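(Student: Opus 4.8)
The plan is to exploit the Gorenstein self-duality of $A$ recorded in Proposition \ref{prop:dualWLP}. Since $A$ has socle degree $M=(m+1)(d-1)=5$, each multiplication pairing $A_i\times A_{5-i}\to A_5\cong\C$ is perfect; in particular the pairings $A_1\times A_4\to A_5$ and $A_2\times A_3\to A_5$ identify $A_4\cong A_1^\ast$ and $A_2\cong A_3^\ast$. The strategy is to identify the dual of $\operatorname{coker}(\mu_Q)$ with a space whose dimension we already control, namely $\mathcal{Z}(Q)$. Since the equality $\dim\mathcal{Z}(Q)=\delta+1$ is just the definition (\ref{eq:deltaepsilon}) of $\delta$, the whole content of the lemma is the first equality $\dim\operatorname{coker}(\mu_Q)=\dim\mathcal{Z}(Q)$.

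First I would write $\operatorname{coker}(\mu_Q)=A_4/QA_2$ and dualize. Using the perfect pairing $A_1\times A_4\to A_5$ to identify $A_4^\ast\cong A_1$, one gets a natural isomorphism $\operatorname{coker}(\mu_Q)^\ast\cong (QA_2)^{\perp}\subseteq A_1$, where the orthogonal complement is taken inside $A_1\cong A_4^\ast$. Concretely, a class $w\in A_1$ lies in this orthogonal precisely when $\langle w, QR\rangle=wQR=0\in A_5$ for every $R\in A_2$. The key manipulation is then to rewrite $wQR$ as $(wQ)R$ by associativity and commutativity of the product in $A$, with $wQ\in A_3$; since the pairing $A_3\times A_2\to A_5$ is perfect, the vanishing of $(wQ)R$ for all $R\in A_2$ is equivalent to $wQ=0\in A_3$, i.e. to $w\in\mathcal{Z}(Q)$. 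This identifies $\operatorname{coker}(\mu_Q)^\ast$ with $\mathcal{Z}(Q)$, whence $\dim\operatorname{coker}(\mu_Q)=\dim\mathcal{Z}(Q)=\delta+1$.

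The argument is essentially formal once the two perfect pairings are set up, so I do not anticipate a genuine obstacle. The only point demanding care is the verification that, under the identifications $A_4\cong A_1^\ast$ and $A_2\cong A_3^\ast$, the transpose of $\mu_Q\colon A_2\to A_4$ is exactly the map $A_1\to A_3$, $w\mapsto wQ$: this is precisely where commutativity of the ring enters, and where one must keep track of the grading so that every pairing invoked genuinely lands in the one-dimensional socle $A_5$ and no degree mismatch slips in.
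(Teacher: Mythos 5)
Your proof is correct and follows essentially the same route as the paper: both identify $\mu_Q\colon A_2\to A_4$ as the dual of $\mu_Q\colon A_1\to A_3$ via the perfect multiplication pairings $A_1\otimes A_4\to A_5$ and $A_2\otimes A_3\to A_5$, so that $\operatorname{coker}(\mu_Q)^\ast\cong\mathcal{Z}(Q)$. You merely spell out the orthogonal-complement computation that the paper leaves implicit.
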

 \begin{proof} The map $A_2\stackrel{Q}\longrightarrow A_4$ is dual to the map $A_1\stackrel{Q}\longrightarrow A_3$ by the perfect pairings $A_4\otimes A_1\to A_5$ and $A_2\otimes A_3\to A_5$ defined by the multiplication. Hence $\operatorname{coker}(\mu_Q)\cong \mathcal{Z}(Q)^\ast$, which proves the statement.
 \end{proof}

 We also have the following formula relating $\mathcal{Z}(Q)$ with the vector space spanned by the derivatives of $Q$ with respect to the parameters $\lambda_1,\ldots,\lambda_N$, with $N=\dim\mathcal{W}$, introduced in the previous section.

 \begin{lm}\label{lm:spanQi}  For arbitrary $m$ one has $$\dim \langle Q,Q_1,\ldots,Q_N\rangle= m+1+\varepsilon-\delta.$$ In particular for $m=4$ we have $$\dim\langle Q,Q_1,\ldots,Q_N\rangle = 5+\varepsilon-\delta.$$
 \end{lm}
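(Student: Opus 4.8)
The plan is to reduce the claim to a statement about the tangent space of $\pi_2(\mathcal{W})$ at the general point $[\bar Q]$. Since $N=m+\varepsilon$, the asserted equality is equivalent to $\dim\langle Q,Q_1,\ldots,Q_N\rangle=(N-\delta)+1$; in view of the equality $\dim\langle Q_1,\ldots,Q_N\rangle=N-\delta$ recorded in Lemma \ref{lm:diff}, this says that $Q$ is not contained in the span of its own derivatives $Q_1,\ldots,Q_N$, so that adjoining $Q$ raises the dimension by exactly one. Rather than argue this linear independence directly, I would phrase the whole computation intrinsically through the quotient map $q\colon A_s\to A_s/\langle Q\rangle$, which is the fibre at $[Q]$ of the tangent bundle of $\PP(A_s)$.

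First I would recall the standard identification $T_{[Q]}\PP(A_s)\cong A_s/\langle Q\rangle$, under which the velocity $\partial_{\lambda_i}[Q(\lambda)]|_{\lambda=0}$ of the curve cut out by the $i$-th parameter is precisely the class $q(Q_i)$. Because $\lambda=(\lambda_1,\ldots,\lambda_N)$ is a system of local parameters for $\mathcal{W}$ centred at the general, hence smooth, point $([\bar z],[\bar Q])$, the parametrization is a local analytic chart; composing its differential with $d\pi_2$ then identifies $\langle q(Q_1),\ldots,q(Q_N)\rangle$ with the image of $d\pi_2$ at that point. Since $\pi_2|_{\mathcal{W}}\colon\mathcal{W}\to\pi_2(\mathcal{W})$ is dominant and we work at a general point, generic smoothness shows that this image is the full tangent space $T_{[\bar Q]}\pi_2(\mathcal{W})$, of dimension $\dim\pi_2(\mathcal{W})=N-\delta$ by (\ref{eq:dimW}).

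It then remains to apply the dimension formula for $q$ restricted to $V:=\langle Q,Q_1,\ldots,Q_N\rangle$. Since $Q\in V$, the kernel of $q|_V$ is $V\cap\langle Q\rangle=\langle Q\rangle$, of dimension $1$, while the image $q(V)=\langle q(Q_1),\ldots,q(Q_N)\rangle$ has dimension $N-\delta$ by the previous step. Hence $\dim V=(N-\delta)+1=m+\varepsilon-\delta+1=m+1+\varepsilon-\delta$, using $N=m+\varepsilon$; specializing to $m=4$ gives $\dim\langle Q,Q_1,\ldots,Q_N\rangle=5+\varepsilon-\delta$, as claimed.

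The only delicate point, and hence the main obstacle, is the surjectivity of $d\pi_2$ onto $T_{[\bar Q]}\pi_2(\mathcal{W})$ together with the fact that this tangent space has dimension exactly $N-\delta$; this rests on working at a general, hence smooth, point of both $\mathcal{W}$ and its image, and on the fibre-dimension identity (\ref{eq:dimW}). This is the same tangent-map surjectivity already invoked in the proof of Lemma \ref{lm:diff}, so no genuinely new input is required: the entire content of the present lemma is the separation of the radial class $[Q]$, which $q$ annihilates, from the honest tangent directions $q(Q_i)$.
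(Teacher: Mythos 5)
Your proof is correct and follows essentially the same route as the paper: both identify $\langle Q,Q_1,\ldots,Q_N\rangle/\langle Q\rangle$ with the tangent space to $\pi_2(\mathcal{W})$ at the general point $[\bar Q]$, whose dimension is $\dim\pi_2(\mathcal{W})=N-\delta$ by (\ref{eq:dimW}), and then add one for the radial direction $\langle Q\rangle$. Your explicit appeal to generic smoothness and the Euler-sequence identification $T_{[Q]}\PP(A_s)\cong A_s/\langle Q\rangle$ merely spells out what the paper's proof leaves implicit.
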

 \begin{proof}
 The embedded tangent space to the image of the map $\pi_2:\mathcal{W}\longrightarrow \PP(A_2)$ defined by $([z],[Q])\mapsto Q\in \PP(A_2)$ is given by $\langle Q, Q_1,\ldots,Q_N\rangle/\langle Q\rangle$, hence it has dimension equal to $\dim\langle Q,Q_1,\ldots,Q_N\rangle-1$.  By Lemma \ref{lm:diff} the same tangent space has dimension $N-\delta=m+\varepsilon-\delta$, from which the statement follows.\end{proof}

 We introduce one last preliminary result about the dimension of $\mathcal{Z}(Q)$.  This will be the only place in this paper where it turns out very useful to consider the inverse system $I^{-1}$, mentioned in Section \ref{sec:setup}.

  \begin{lm}\label{lm:ZQbound}
 If $m=4$  one has $\delta\leq 1$, that is $\dim \mathcal{Z}(Q)\leq 2$. \end{lm}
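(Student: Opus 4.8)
The plan is to reinterpret the space $\mathcal{Z}(Q)$ through the Macaulay inverse system $I^{-1}$ and then reduce the desired bound to the complete intersection criterion of Proposition~\ref{prop:T}. Since the socle degree is $M=5$, the system $I^{-1}$ is generated by a single quintic $g\in S^5U$, and for a representative $Q\in S^2U^\ast$ of a class in $A_2$ the contraction $G:=Q\cdot g\in S^3U$ is a well defined element of $I^{-1}_3$, independent of the representative because $I_2\cdot g=0$. The key observation is that for $z\in U^\ast\cong A_1$ one has $\partial_z G=\partial_z(Q\cdot g)=(zQ)\cdot g$, and $(zQ)\cdot g=0$ if and only if $zQ\in I_3$ (as $I=\operatorname{Ann}(g)$), that is, if and only if $z\in\mathcal{Z}(Q)$. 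Hence $\mathcal{Z}(Q)=\{z\in U^\ast\ |\ \partial_z G=0\}$ is exactly the kernel of the first polarization map $z\mapsto\partial_z G$, so that $\dim\mathcal{Z}(Q)=5-e$, where $e:=\dim\langle\partial_z G\ |\ z\in U^\ast\rangle$. It is classical that this number equals the minimal dimension of a subspace $V\subseteq U$ with $G\in S^3V$, i.e. the number of essential variables of $G$. Consequently the statement $\delta=\dim\mathcal{Z}(Q)-1\leq 1$ is equivalent to $e\geq 3$.

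I would then prove $e\geq 3$ by contradiction, assuming $e\leq 2$. Writing $D^1G=\langle\partial_z G\ |\ z\in U^\ast\rangle$, note first that $D^1 G\subseteq I^{-1}_2$, because $I^{-1}$ is a submodule of $S^\bullet U$ closed under the derivation action and $G\in I^{-1}_3$. On the other hand $e\leq 2$ means $G\in S^3V$ for a subspace $V\subseteq U$ with $\dim V=e$, so $D^1G\subseteq S^2V$. If $e=1$ then $G=cv^3$ and $D^1G=\langle v^2\rangle$ already consists of squares; if $e=2$ then $\dim D^1G=2$ while $\dim S^2V=3$, so $\PP(D^1G)$ is a line in $\PP(S^2V)\cong\PP^2$ and must meet the conic of squares $\{[p^2]\ |\ p\in V\}$. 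In either case there is a nonzero power $p^2$ of a linear form with $p^2\in D^1G\subseteq I^{-1}_2$, which contradicts Proposition~\ref{prop:T}. Therefore $e\geq 3$ and $\delta\leq 1$.

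The heart of the matter is the translation in the first paragraph, where the degrees must be tracked carefully: the identity $\dim\mathcal{Z}(Q)=5-e$ and the inclusion $D^1G\subseteq I^{-1}_2$ are precisely what make Proposition~\ref{prop:T} applicable. The remaining geometric input, namely that the first derivatives of a cubic in at most two variables always contain a perfect square, is an elementary incidence fact (a line meets a conic in $\PP^2$), with only the degenerate case $e=1$ needing a separate one-line check. It is worth remarking that the argument uses nothing about $Q$ beyond $Q\neq 0$ in $A_2$, so it in fact bounds $\dim\mathcal{Z}(Q)\leq 2$ for every nonzero $Q$, not merely for the general $Q\in\pi_2(\mathcal{W})$; the genericity hypothesis of the lemma is not actually needed.
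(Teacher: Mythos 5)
Your proof is correct and is essentially the paper's own argument: both pass to the inverse system generator $g\in S^5U$, form the cubic $G=Q(g)\in I^{-1}_3$, observe that $\dim\mathcal{Z}(Q)\geq 3$ forces $G$ to be a cone with vertex $\PP\mathcal{Z}(Q)$, i.e.\ a cubic in at most two essential variables, and then exhibit a square of a linear form inside $\langle\partial_z G\ |\ z\in U^\ast\rangle\subseteq I^{-1}_2$, contradicting Proposition~\ref{prop:T}. The only (cosmetic) difference is in the last step, where the paper factors the binary cubic explicitly as $x(\alpha_1x+\beta_1y)(\alpha_2x+\beta_2y)$ and inspects its partials, while you invoke the incidence fact that the line $\PP(D^1G)$ must meet the conic of squares in $\PP(S^2V)\cong\PP^2$ --- a slightly cleaner execution of the same step, which also makes explicit (as the paper's proof implicitly does) that no genericity of $Q$ is needed.
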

\begin{proof} Assume that $\dim \mathcal{Z}(Q)\geq 3$. Then consider $g\in S^5U$ such that $I^{-1}$ is generated by $g$ as a $S^\bullet U^\ast$-module.  Since $zQ=0$ for any $z\in\mathcal{Z}(Q)$, then, by Remark \ref{rmk:vertexes}, the cubic $Q(g)\in S^3U$ is a cone with vertex space $\PP\mathcal{Z}(Q)$, and we are assuming $\dim\PP\mathcal{Z}(Q)\geq 2$. 
But then, in a suitable coordinate system, $Q(g)$ is defined by a degree three homogeneous polynomial in two variables, which therefore may written in a suitable coordinate system as $f(x,y)=x(\alpha_1x+\beta_1y)(\alpha_2x+\beta_2y)$. Then one can easily see the vector space generated by $f_x$ and $f_y$, contained in $I^{-1}_2$ by the construction resumed  in section 2.1, always contains the square of some linear form, obtaining a contradiction by Proposition \ref{prop:T}.
\end{proof}
 The spaces $\mathcal{Z}(Q)$ and $\langle Q, Q_1,\ldots,Q_N\rangle$ are connected with $\mu_Q$ in the following way.
 \begin{lm}\label{lm:kermuQ} One has $\mathcal{Z}(Q)A_1+\langle Q, Q_1,\ldots,Q_N\rangle\subseteq \ker\mu_Q$.
 \end{lm}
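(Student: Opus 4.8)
The plan is to verify the inclusion one summand at a time, observing first that both $\mathcal{Z}(Q)A_1$ and $\langle Q,Q_1,\ldots,Q_N\rangle$ are subspaces of $A_2$, the source of $\mu_Q$ (here each $Q_i=\partial_{\lambda_i}Q$ lies in $A_s=A_2$, being a parameter-derivative of the degree-$s$ family $Q$). So in each case it is meaningful to ask whether the subspace is killed by multiplication by $Q$.

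First I would handle $\mathcal{Z}(Q)A_1$. A spanning element has the form $zw$ with $z\in\mathcal{Z}(Q)$ and $w\in A_1$, and by associativity in $A$ one has $\mu_Q(zw)=Q(zw)=(Qz)w=0$, since $Qz=0$ by the very definition of $\mathcal{Z}(Q)$. By linearity the entire subspace $\mathcal{Z}(Q)A_1$ then lies in $\ker\mu_Q$.

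Next I would treat $\langle Q,Q_1,\ldots,Q_N\rangle$. Recall that in the case $m=4$ one has $s=2$, so $A_{2s}=A_4$ is exactly the target of $\mu_Q$. It therefore suffices to observe that $\mu_Q(Q)=Q^2=0$ and $\mu_Q(Q_i)=QQ_i=0$ for every $i$: the first is the special case $Q'=Q$ of Lemma \ref{lm:diff}(i), and the second is precisely Lemma \ref{lm:diff}(ii), both read in $A_{2s}=A_4$. Hence $\langle Q,Q_1,\ldots,Q_N\rangle\subseteq\ker\mu_Q$ as well, and combining the two inclusions by linearity yields the claim.

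The argument is essentially formal: the only inputs are the definition of $\mathcal{Z}(Q)$ together with the vanishing relations already established in Lemma \ref{lm:diff}, and the sole piece of bookkeeping is the identification $2s=4$, valid here because $m=4$. I do not expect any genuine obstacle; the lemma is best viewed as a repackaging of Lemma \ref{lm:diff} that isolates exactly the part of $\ker\mu_Q$ needed for the dimension estimate that follows.
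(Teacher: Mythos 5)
Your proof is correct and follows exactly the paper's route: the paper's own proof is the one-line observation that the inclusion follows from the definition of $\mathcal{Z}(Q)$ (giving $Q(zw)=(Qz)w=0$) together with Lemma \ref{lm:diff} (i) (with $Q'=Q$) and (ii), which is precisely what you spell out. Your bookkeeping that $2s=4$ so the relations $Q^2=0$ and $QQ_i=0$ live in $A_4$, the target of $\mu_Q$, is a fair explicit check that the paper leaves implicit.
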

 \begin{proof} This is clear by definition of $\mathcal{Z}(Q)$ and by Lemma \ref{lm:diff} (i) and (ii).
 \end{proof}
We can finally prove the following.
\begin{thm}\label{thm:WLPm=4} The WLP holds for $m=4$ and $d=2$.
\end{thm}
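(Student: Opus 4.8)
The plan is to assume the WLP fails and derive a contradiction, working throughout with the variety $\mathcal{W}$ and the integers $\delta,\varepsilon$ of Section \ref{sec:diff}. By Proposition \ref{prop:dualWLP}(3) it suffices to treat the single map $\mu_z\colon A_2\to A_3$, and by Lemma \ref{lm:ZQbound} we may assume $0\le\delta\le 1$. For a general $([z],[Q])\in\mathcal{W}$ I would make the auxiliary multiplication $\mu_Q\colon A_2\to A_4$ the central object, since its kernel is where all the preliminary lemmas combine.

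First I would pin down $\dim\ker\mu_Q$. Since $\dim A_2=10$ and $\dim A_4=5$, Lemma \ref{lm:cokermuQ} gives $\operatorname{rk}\mu_Q=5-(\delta+1)=4-\delta$, hence $\dim\ker\mu_Q=6+\delta$. By Lemma \ref{lm:kermuQ} this kernel contains the two subspaces $\mathcal{Z}(Q)A_1$ and $\langle Q,Q_1,\dots,Q_N\rangle$, the latter of dimension $5+\varepsilon-\delta$ by Lemma \ref{lm:spanQi}. Comparing $\dim\langle Q,Q_1,\dots,Q_N\rangle\le\dim\ker\mu_Q$ already forces $\varepsilon\le 1+2\delta$, which together with $\delta\le 1$ leaves only finitely many numerical possibilities, and I would rule them out case by case.

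The case $\delta=1$ is where the geometry of Section \ref{sec:d=2} should enter decisively. Here $\dim\ker\mu_Q=7$, while $\mathcal{Z}(Q)=\langle z,w\rangle$ is a pencil and Lemma \ref{lm:bdeltageq1}(1) gives $\dim\mathcal{Z}(Q)A_1\ge 2m-1=7$. Since $\mathcal{Z}(Q)A_1\subseteq\ker\mu_Q$, equality is forced: $\ker\mu_Q=\mathcal{Z}(Q)A_1$ has dimension exactly $2m-1$, so the plane $\Pi=V(\mathcal{Z}(Q))$ lies in the locus $\overline{\varUpsilon}$ of Lemma \ref{lm:bdeltageq1}(2), and moreover $\langle Q,Q_1,\dots,Q_N\rangle\subseteq\mathcal{Z}(Q)A_1$. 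I would then study the incidence map $\pi_2(\mathcal{W})\to\overline{\varUpsilon}$, $[Q]\mapsto V(\mathcal{Z}(Q))$: its source has dimension $\dim\pi_2(\mathcal{W})=N-\delta=3+\varepsilon$, its target has dimension $\le 3$ by Lemma \ref{lm:bdeltageq1}(2), and its fibre over $\Pi=V(z,w)$ is $\mathbb{P}(\mathcal{Q}(z)\cap\mathcal{Q}(w))$. Comparing dimensions forces $\mathcal{Q}(z)\cap\mathcal{Q}(w)=\mathcal{Q}(z)$, i.e.\ the fibre $\mathcal{Q}$ is constant along the whole pencil $\langle z,w\rangle$, and the contradiction should then be extracted by feeding this rigidity back into the complete intersection hypothesis through Proposition \ref{prop:stratI2}.

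For $\delta=0$ one has $\mathcal{Z}(Q)=\langle z\rangle$ with $[z]$ general, so the Injectivity Lemma (Corollary \ref{cor:injlemma}) applies and gives $\dim\mathcal{Z}(Q)A_1=\dim zA_1=5$, while $\dim\ker\mu_Q=6$; the bound above becomes $\varepsilon\le 1$, and I would finish by analysing $zA_1+\langle Q,Q_1,\dots,Q_N\rangle$ inside the six-dimensional $\ker\mu_Q$, tracking the induced map $\mu_z$ (whose image on $\langle Q,Q_1,\dots,Q_N\rangle$ is $A_1Q$, of dimension $4-\delta$ by relation (\ref{eq:basicrel2})) together with the relations (i)--(iv) of Lemma \ref{lm:diff}. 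The main obstacle, in both cases, is precisely this endgame: the individual dimension counts turn out to be internally consistent, so the contradiction cannot come from a single inequality but must be wrung out of the forced equalities --- $\ker\mu_Q=\mathcal{Z}(Q)A_1$ and the coincidence of the kernels $\mathcal{Q}(z)$ along pencils --- by confronting them with the complete intersection constraint limiting how many independent quadrics of $I_2$ can contain a common linear subspace, which is the content underlying Propositions \ref{prop:I_d} and \ref{prop:stratI2}.
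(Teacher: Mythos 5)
Your setup coincides with the paper's (the same $\mu_Q\colon A_2\to A_4$, the same dimension counts $\dim\ker\mu_Q=6+\delta$, $\dim\langle Q,Q_1,\ldots,Q_N\rangle=5+\varepsilon-\delta$, and the same appeal to Lemmas \ref{lm:cokermuQ}--\ref{lm:kermuQ} and \ref{lm:bdeltageq1}), but the proof stops exactly where the actual work begins, and you say so yourself: in neither case do you derive a contradiction, and the ideas that close the two cases in the paper are absent from your sketch. In the case $\delta=1$, the dimension comparison you set up (source $3+\varepsilon$, target $\leq 3$ by Lemma \ref{lm:bdeltageq1}(2)) is used in the paper to force $\varepsilon=0$; your alternative conclusion, that $\mathcal{Q}(z)=\mathcal{Q}(z')$ along the pencil $\mathcal{Z}(Q)$, is \emph{vacuous} precisely when $\varepsilon=0$ (then $\mathcal{Q}(z)=\langle Q\rangle$ automatically for every $z'\in\mathcal{Z}(Q)$), which is the only case the counts leave open, so ``feeding this rigidity back into Proposition \ref{prop:stratI2}'' has no content there. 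The paper's contradiction comes from a genuinely different ingredient that your proposal never touches: the \emph{symmetry} of the sheaf morphism $\Phi\colon A_2\otimes\OO_{\PP(A_1)}\to A_3\otimes\OO_{\PP(A_1)}(1)$ of (\ref{eq:Kbundle}), restricted to the line $l=\pi_1\pi_2^{-1}([Q])$. Since $\varepsilon=0$, the kernel sheaf $\mathcal{K}_l$ has rank one, and the constant section $Q$ trivializes it, $\mathcal{K}_l\cong\OO_{\PP^1}$; symmetry then forces the cokernel to be $\mathcal{N}_l\cong\mathcal{K}_l^\ast(1)=\OO_{\PP^1}(1)$, and the degree count in $0\to\OO_{\PP^1}\to\OO_{\PP^1}^{10}\to\OO_{\PP^1}(1)^{10}\to\OO_{\PP^1}(1)\to 0$ is inconsistent. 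No amount of linear-algebra bookkeeping on $\ker\mu_Q$ replaces this step, and, as you correctly observe, the numerics alone are consistent.

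The case $\delta=0$ has the same defect. The paper does not merely ``analyse $zA_1+\langle Q,Q_1,\ldots,Q_N\rangle$ inside $\ker\mu_Q$'': from $\dim\bigl(zA_1\cap\langle Q,Q_1,\ldots,Q_N\rangle\bigr)\geq 4$ it extracts a $4$-dimensional subspace $V\subseteq\mu_z^{-1}\bigl(zA_1\cap\langle Q,Q_1,\ldots,Q_N\rangle\bigr)\subseteq A_1$, proves the key Claim $VA_1=A_2$ (this uses that $V=H^0\mathcal{I}_p(1)$ for a point $p$ and that $I_2\not\subseteq H^0\mathcal{I}_p(2)$, since a complete intersection of quadrics is base-point free), and then concludes $zA_2=zVA_1\subseteq\langle Q,Q_1,\ldots,Q_N\rangle A_1$, whence by Lemma \ref{lm:diff}(iv) one gets $z^3A_2\subseteq z^2\langle Q,Q_1,\ldots,Q_N\rangle A_1=\{0\}$ and so $z^3=0$ (the socle sits in degree $5$), contradicting the fact that cubes of general linear forms span $A_3$. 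This chain --- the choice of $V$, the base-point-freeness claim $VA_1=A_2$, and the multiplication by $z^2$ killing the derivatives $Q_i$ --- is the entire content of the case, and none of it is present or foreshadowed in your outline. So the proposal is a correct reconstruction of the paper's framework and of all its preliminary dimension counts, but it is missing the two decisive ideas (the symmetric-determinantal degree argument on the line $l$, and the $V$--$z^3$ argument), and therefore does not constitute a proof.
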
 
\begin{proof}
Assume that WLP does not hold. In view of Lemma \ref{lm:ZQbound}, we have two cases.
\vskip2mm
\paragraph{\bf Case 1: $\delta= 1$} Then, by Lemma \ref{lm:cokermuQ} we have $$\dim\ker\mu_Q=\dim A_2-\dim A_4+\dim\operatorname{coker}\mu_Q=5+\delta+1=7.$$ Since $\mathcal{Z}(Q)A_1\subseteq \ker\mu_Q$ and $\dim \mathcal{Z}(Q)A_1\geq 7$ by Lemma \ref{lm:bdeltageq1}, we have $$\dim \mathcal{Z}(Q)A_1=7.$$ 
Then Lemma \ref{lm:bdeltageq1} applies to this case.  The planes $\Pi=V(z,w)$, with $\langle z,w\rangle=\PP(\mathcal{Z}(Q))=\pi_1^{-1}([Q])$, for $Q$ varying in $\pi_2(\mathcal{W})$, form a irreducible family of dimension  equal to $\dim\pi_2(\mathcal{W})=4+\varepsilon-\delta=3+\varepsilon$, by formula (\ref{eq:dimW}) in Section \ref{sec:diff}. But by (2) of Lemma \ref{lm:bdeltageq1}, we know that such a family is actually $3$-dimensional, hence $\varepsilon=0$.
In this case we have  $\dim\pi_2(\mathcal{W})=3$ and $\dim\mathcal{W}=4$. A general fiber of $\pi_1:\mathcal{W}\to \PP(A_1)=\PP^4$ is one point and for any $[Q]\in\pi_2(\mathcal{W})$ one has $\pi_1\pi_2^{-1}([Q])$ a line in $\PP(A_1)$. Moreover $\PP(A_1)$ is clearly covered with such lines. 

Then we take $[z]\in \PP(A_1)$ general and consider the unique $[Q]$ with $([z],[Q])\in \mathcal{W}$. Of course the line $l=\pi_1\pi_2^{-1}([Q])$ contains  $z$. 

Now let us consider the sheaf map $\Phi$ (\ref{eq:Kbundle}) described at the beginning of Section \ref{sec:d=2m=4}, and let us restrict it to $l\cong \PP^1$. 
We obtain an exact sequence of sheaves
$$0\to\mathcal{K}_l\to A_2\otimes\OO_{\PP^1}\stackrel{\Phi_l}\longrightarrow A_3\otimes\OO_{\PP^1}(1)\longrightarrow \mathcal{N}_l\to 0,$$
with $\mathcal{K}_l$ and $\mathcal{N}_l$ defined as the sheaf kernel and cokernel of $\Phi_l$, respectively. 

Now, the fact that $\varepsilon=0$ implies that $\mathcal{K}_l$ is a rank $1$ torsion free sheaf on $\PP^1$, hence  it is a line bundle $\mathcal{K}_l\cong \OO_{\PP^1}(t)$ with $t\leq 0$. A priori the sheaf inclusion $\mathcal{K}_l\hookrightarrow A_2\otimes\OO_{\PP^1}$ may not be a vector bundle embedding, i.e. the associated fiber map $\mathcal{K}_l([w])\to A_2$ may be $0$ at some point $[w]\in \PP^1$. However, since the zero constant $Q\in A_2$ belongs to $\ker\Phi_l([w])$ for any $[w]\in l=\pi_1\pi_2^{-1}([Q])$, we see that the vector bundle embedding $\OO_{\PP^1}\stackrel{Q}\longrightarrow A_2\otimes \OO_{\PP^1}$ factors through $\mathcal{K}_l$, which induces an isomorphism $\OO_{\PP^1}\cong \mathcal{K}_l$.
Then the exact sequence of sheaves above becomes a  vector bundle exact sequence 
$$0\to\OO_{\PP^1}\to A_2\otimes\OO_{\PP^1}\stackrel{\Phi}\longrightarrow A_3\otimes\OO_{\PP^1}(1)\longrightarrow \mathcal{N}_l\to 0.$$
The symmetry of $\Phi$ implies that $\mathcal{N}_l=\mathcal{K}_l^\ast(1)=\OO_{\PP^1}(1)$ and this is impossible by degree reasons, as $\dim A_2=\dim A_3=10$.
\vskip2mm
\paragraph{\bf Case 2: $\delta=0$} Then $\mathcal{Z}(Q)=\langle z\rangle$ and, by Lemma \ref{lm:kermuQ}, we have 
$$zA_1+\langle Q, Q_1,\ldots,Q_N\rangle\subseteq \ker\mu_Q.$$
 Since $z$ is general and hence we can apply the result of Corollary \ref{cor:injlemma}, we have $$\dim zA_1=5.$$ Moreover, by  Lemma \ref{lm:cokermuQ}, we have $$\dim\ker\mu_Q=\dim A_2-\dim A_4+\delta+1=6$$ and finally, by Lemma  \ref{lm:spanQi}, we have $$\dim \langle Q, Q_1,\ldots,Q_N\rangle=5+\varepsilon.$$
 Then it is easy to conclude that $$\dim  (zA_1\cap\langle Q ,Q_1,\ldots,Q_N\rangle)\geq 4+\varepsilon\geq 4.$$
 Now we consider a subspace $V\subseteq \mu_z^{-1}(zA_1\cap\langle Q ,Q_1,\ldots,Q_N\rangle)\subset A_1$ with $\dim V=4$. 
\vskip2mm
 \paragraph{\em Claim} $A_2=VA_1$.
 \begin{proof}[Proof of the Claim] Recalling that $A_1=U^\ast/I_1=U^\ast$, the claim is equivalent to assert that $VU^\ast+I_2=S^2U^\ast$. First of all, using the fact that $V$ is the space of linear forms vanishing on one point $p\in\PP^m=\PP(U)$, that is $V=H^0\mathcal{I}_p(1)$, we see that $VU^\ast=H^0\mathcal{I}_p(2)$ and it has dimension $\dim S^2U^\ast-1=14$. Then $VU^\ast+I_2\subsetneq S^2U^\ast$ if and only if $$\dim (VU^\ast\cap I_2)\geq 1+ \dim I_2+\dim VU^\ast-\dim S^2U^\ast=5.$$  But this means that in $I_2\subseteq VU^\ast=H^0\mathcal{I}_p(2)$, which is impossible. 
    \end{proof}
Then we have $zA_2=zVA_1 \subset \langle Q, Q_1,\ldots,Q_N\rangle A_1$, which, by Lemma \ref{lm:diff}  (iv) implies $$z^3A_2 \subset z^2\langle Q, Q_1,\ldots,Q_N\rangle A_1=\{0\},$$ hence, since the socle of $A$ is generated in degree $5$, one finds $z^3=0$. But this is impossible for general $z\in A_1$, since the $z^3$'s with $z$ general generate $A_3$.
 \end{proof}

\end{document}